\documentclass[a4paper]{article}
\usepackage[dvipdfmx]{graphicx}
\usepackage{amsmath, amssymb, amsfonts, amsthm}
\usepackage{ascmac}
\newtheorem{thm}{Theorem}
\newtheorem*{main}{\rm\bf Theorem \ref{Mainthm}}

\newtheorem{lem}{Lemma}

\newtheorem{prop}{Proposition}[section]

\setlength{\topmargin}{-45pt}
\setlength{\textwidth}{135mm} 
\setlength{\textheight}{235mm} 
\setlength{\oddsidemargin}{10mm} 
\begin{document}
\def\bslash{
\setlength{\unitlength}{1pt}
\thinlines 
\begin{picture}(10,10)
\put(-9.5,10){\line(2,-1){29}}
\end{picture}
}
\title{A sequence of $\pi /3$-equiangular hyperbolic polyhedra}

\author{Jun Nonaka}

\date{}

\maketitle
\begin{abstract}
Atkinson \cite{ref:atk} found a sequence of three-dimensional hyperbolic polyhedra whose dihedral angles are $\pi /3$. 
In this paper, we construct another sequence of such polyhedra. 
We also determine the volumes of some of these polyhedra. 
\end{abstract}

\textbf{Key words.} Coxeter polyhedron, hyperbolic 3-space, combinatorics. \\

\textbf{MSC (2020)}. 51F15, 51M20, 52B05\\


\section{Introduction}
We consider convex polyhedra in the hyperbolic 3-space $\mathbb{H}^3$ with finite volume. 
If a vertex of a polyhedron is not lying in the boundary $\partial \mathbb{H}^3$ of $\mathbb{H}^3$ but in $\mathbb{H}^3$, then it is {\it proper}. We call a vertex {\it ideal} if it belongs to $\partial \mathbb{H}^3$. 
If all vertices of a polyhedron are ideal, then this polyhedron is also called {\it ideal}. 
Since the volume of a polyhedron is finite, the number of ideal vertices of it is also finite. 
A polyhedron is called {\it simple} if any of its vertices belong to only three faces, and {\it simple at edges} if any of its edges belongs only to two faces. We call a polyhedron {\it almost simple} if it is simple at edges and any of its proper vertices belongs only to three faces. 
A Coxeter polyhedron in $\mathbb{H}^3$ is a convex polyhedron in $\mathbb{H}^3$ with dihedral angles of the form $\pi /m$ ($m\in \mathbb{N}$, $m\geq 2$) at all edges. 
A convex polyhedron is called {\it non-obtuse} if all the dihedral angles are less than or equal to $\pi /2$. Thus Coxeter polyhedra are non-obtuse polyhedra.  
If all dihedral angles of a Coxeter polyhedron are equal to $\pi /k$, then it is called {\it a} $\pi /k$-{\it equiangular polyhedron}. 
Andreev's theorem \cite{ref:and, ref:roe} says that if three faces of a non-obtuse polyhedron $P$ share a vertex of $P$, then the sum of the dihedral angles of them is at least $\pi $, in particular, the sum of them is $\pi $ if and only if the vertex is ideal. 
This theorem also says that if four faces of it share a vertex, then all the dihedral angles between them are equal to $\pi /2$. 
Thus, there is no $\pi /k$-equiangular polyhedron in $\mathbb{H}^3$ for $k>4$. 
In addition, if a $\pi /3$-equiangular polyhedron exists in $\mathbb{H}^3$, then it must be ideal since $\pi /3+\pi /3+\pi /3=\pi$. 
Let us consider the upper half-space model of $\mathbb{H}^3$ (see \cite[Chapter 4]{ref:r}, for example). 
In this model, the boundary of $\mathbb{H}^3$ consists of the point at infinity $\infty $ of the upper half-space and the Euclidean plane. 
The hyperplane of this model is the intersection of either a Euclidean sphere and $\mathbb{H}^3$ or a Euclidean hyperplane and $\mathbb{H}^3$, and its closure is orthogonal to the boundary of $\mathbb{H}^3$ in the Euclidean sense. 
Now we let a vertex of an ideal $\pi /3$-equiangular polyhedron $P$ be a point at infinity $\infty $ of the upper half-space model, and assume that this vertex is shared by exactly $m$ faces of $P$. 
Then 
\[ 2\pi =m(\pi -\pi /k) \geq m\pi /2.\]
Thus $m=3,4$. Then we obtain $(m,k)=(3,3), (4,2)$. 
This means that a $\pi /k$-equiangular polyhedron is not ideal if and only if $k=2$. 
By the above consideration, we also know that if a $\pi /k$-equiangular polyhedron is ideal, then $k$ is either 2 or 3. 

We remark that $\pi /2$-equiangular polyhedra are also called {\it right-angled}. 
There are a lot of results of the existence and examples of such polyhera. 
A compact $\pi /2$-equiangular polyhedron (i.e., the $\pi /2$-equiangular polyhedron has only proper vertices) in $\mathbb{H}^n$ exists only for $n\leq 4$ \cite{ref:vi}. One of the examples of compact $\pi /2$-equiangular polyhedra in $\mathbb{H}^4$ is known as 120-cell. 
Potyagailo, Vinberg and Dufour \cite{ref:du} proved that there are no $\pi /2$-equiangular polyhedra with finite volume in $\mathbb{H}^n$ for $n\geq 13$. However, examples of $\pi /2$-equiangular polyhedra of finite volume in $\mathbb{H}^n$ are known only for $n\leq 8$. 
In \cite{ref:kol}, Kolpakov proved that ideal $\pi /2$-equiangular polyhedra do not exist in $\mathbb{H}^n$ for $n\geq 7$. 
To obtain this result, he showed a polyhedron which has both a minimal volume and a minimal facet number among all ideal $\pi /2$-equiangular polyhedra in $\mathbb{H}^4$. 

In $\mathbb{H}^3$, there are also many results about $\pi /2$-equiangular polyhedra. 
Inoue show a sequence of disjoint unions of compact $\pi /2$-equiangular hyperbolic polyhedra in \cite{ref:In}. He also determined the first 825 polyhedra in the list of smallest volume compact $\pi /2$-equiangular hyperbolic polyhedra in \cite{ref:Ino}. 
We also see the table giving the initial segment in the list of volumes of compact $\pi /2$-equiangular polyhedra in \cite{ref:ves}. 
Vesnin and Egorov \cite{ref:ve} gave upper bounds on volumes of ideal $\pi /2$-equiangular polyhedra in terms the number of faces of the polyhedron. 

On the other hand, we do not know so many results of $\pi /3$-equiangular polyhedra as $\pi /2$-equiangular polyhedra. 
Atkinson estimates the volume of $\pi /3$-equiangular polyhedra in \cite{ref:atk}. 
In the same paper, he also introduced a sequence of $\pi /3$-equiangular polyhedra. 

In this paper, we give another sequence of $\pi /3$-equiangular polyhedra $\{ P_n \} $. 
Then we can see that such polyhedra exist infinitely many, and a certain amount of abundance. 
Moreover, we find the volumes of $P_n$. 
We also obtain the following theorem. 

\begin{main}
A polyhedron $P_2$ has the second smallest volume among all $\pi /3$-equiangular polyhedra in $\mathbb{H}^3$. 
\end{main}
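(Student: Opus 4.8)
The plan is to split the statement into a combinatorial classification of the $\pi/3$-equiangular polyhedra with few faces, together with a volume estimate for all the others. Recall from the introduction that every $\pi/3$-equiangular polyhedron is ideal and simple (each vertex lies on exactly three faces). The first point I would establish is that, by Andreev's theorem \cite{ref:and,ref:roe}, a simple ideal polyhedron carries no $\pi/3$-equiangular structure once it contains a prismatic $3$-circuit --- three pairwise-adjacent faces not all passing through a common vertex --- since along such a circuit one would be forced to have three dihedral angles summing to $\pi$ with the three faces not bounding a vertex, contradicting the angle conditions; conversely, any simple $3$-connected polyhedron with no prismatic $3$-circuit does carry a (unique) such structure. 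Next I would observe that every triangular face of a simple polyhedron other than the tetrahedron $\Delta$ produces a prismatic $3$-circuit, namely its three neighbouring faces: these are pairwise adjacent because the polyhedron is simple, and a short argument shows that they have a common vertex only when the whole polyhedron is $\Delta$. Hence any $\pi/3$-equiangular polyhedron $P\neq\Delta$ has all faces of size at least $4$.

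Combining this with Euler's formula for simple polyhedra ($F=V/2+2$ and $2E=3V$) yields $3V=\sum_{f}\deg(f)\ge 4F=2V+8$, so $V\ge 8$; when $V=8$ we get $F=6$ and all six faces quadrilateral, which pins down the polyhedron combinatorially as the cube, and by the uniqueness in Andreev's theorem this is the regular ideal cube $C$. Therefore the $\pi/3$-equiangular polyhedra with at most six faces are exactly $\Delta$ ($F=4$) and $C$ ($F=6$), none has five faces, and every other such polyhedron has $F\ge 7$. I would then record the two volumes: $\mathrm{vol}(\Delta)=3\Lambda(\pi/3)$, where $\Lambda$ is the Lobachevsky function, and $\mathrm{vol}(C)=15\Lambda(\pi/3)$, the latter obtained by cutting $C$ into five regular ideal tetrahedra --- the one spanned by four alternate vertices together with the four corner tetrahedra, each of which is again regular because an ideal tetrahedron admitting a $3$-fold symmetry is automatically regular (opposite edges of an ideal tetrahedron have equal dihedral angles). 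In particular $\mathrm{vol}(\Delta)<\mathrm{vol}(C)$, nothing with at most six faces lies strictly between them, and --- since the polyhedron $P_2$ of the sequence is, by its construction, the regular ideal cube $C$ --- it remains only to prove that $\mathrm{vol}(P)>15\Lambda(\pi/3)$ for every $\pi/3$-equiangular polyhedron $P$ with $F\ge 7$ faces.

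This last inequality is the main obstacle. For $F$ large I would invoke a volume lower bound for $\pi/3$-equiangular polyhedra of the kind proved by Atkinson \cite{ref:atk}, which grows with the number of faces and so eventually surpasses $15\Lambda(\pi/3)$; the delicate point is the transition range, where this general bound is not yet effective, so the finitely many combinatorial types with $F$ between $7$ and that threshold must be handled individually. Here the absence of triangular faces does the essential work of keeping this list short: for $F=7$ one finds only the regular ideal pentagonal prism, for $F=8$ only the hexagonal prism and a few others, and so on. For each of these I would bound the volume from below either by using the polyhedron's symmetry group to write it as a union of isometric pieces, or by an explicit decomposition into ideal simplices, and verify that the volume exceeds $15\Lambda(\pi/3)$; for the prisms this is transparent because the volume of the regular ideal $\pi/3$-gonal prism increases with the number of side faces, while for the remaining low-$F$ types it is a finite, if somewhat tedious, check. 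Thus the real work is concentrated in obtaining volume estimates sharp enough to close this transition range, whereas the combinatorial reduction in the first two paragraphs is comparatively routine.
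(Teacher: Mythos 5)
Your combinatorial reduction (no prismatic $3$-circuits, no triangular faces except for the tetrahedron, hence $V\ge 8$ with $V=8$ forcing the regular ideal cube) matches the paper's Lemmas~2 and~3 in substance and is sound, as are the volume evaluations $\mathrm{vol}(P_1)=3\Lambda(\pi/3)$ and $\mathrm{vol}(P_2)=5\,\mathrm{vol}(P_1)$. The genuine gap is in your third paragraph, which is where the entire content of the theorem lies: you must show $\mathrm{vol}(P)>\mathrm{vol}(P_2)$ for every $\pi/3$-equiangular $P$ with at least seven faces, and your plan --- invoke a face-number volume lower bound of Atkinson \cite{ref:atk} for large $F$ and settle the ``transition range'' by a case-by-case check --- is not carried out at any point. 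You do not identify the threshold at which that bound exceeds $15\Lambda(\pi/3)$, you do not enumerate the combinatorial types below the threshold (already at $F=8$ you say only ``a few others''), and you do not compute or estimate a single volume in that range; you concede yourself that ``the real work is concentrated'' there. Since the linear lower bound is far from sharp for small $F$, the transition range is substantial and the number of simple, triangle-free, prismatic-$3$-circuit-free types in it is large, so the ``finite, if somewhat tedious, check'' is an unexecuted and potentially very large program rather than a routine verification. A smaller point: Andreev's theorem as stated excludes the triangular prism from its scope, so condition (c) cannot be applied directly to rule out a $\pi/3$-equiangular triangular prism; the paper disposes of that case by a separate geometric argument.

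For comparison, the paper closes the argument without any enumeration or appeal to Atkinson's estimates. Placing a vertex $v_0$ at infinity with incident faces $A_1,A_2,A_3$, it first shows (Lemma~1) that an ideal tetrahedron with at least three dihedral angles equal to $\pi/3$ is regular; consequently, cutting off any ideal vertex of $P$ by the halfplane through its three neighbours produces a tetrahedron congruent to $P_1$. For $v(P)\ge 12$ it then exhibits five such vertices whose cut-off tetrahedra are pairwise disjoint --- the three neighbours $v_1,v_2,v_3$ of $v_0$ together with two further vertices not lying on any $A_i$ and not joined to each other by an edge --- which gives $\mathrm{vol}(P)>5\,\mathrm{vol}(P_1)=\mathrm{vol}(P_2)$ uniformly. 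The only case requiring an explicit computation is $v(P)=10$, where $P$ is the pentagonal prism; there the paper decomposes $P$ into four copies of $P_1$ plus a remainder whose volume is evaluated with Lobachevsky functions. If you want to complete your proof, replacing the Atkinson-plus-enumeration step with a disjoint-tetrahedra argument of this kind is essentially what is needed.
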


We remark that the polyhedron which has minimal volume among all $\pi /3$-equiangular polyhedra is known as $P_1$ (see \cite{ref:atk}). 

\section{A sequence of $\pi$/3-angled polyhedra in $\mathbb{H}^3$}
The aim of this section is to give a sequence of $\pi /3$-equiangular polyhedra. 
For checking the existence of polyhedra in $\mathbb{H}^3$, the following theorem (called ``Andreev's theorem'') is useful 
(see also \cite{ref:roe}). 

\begin{thm}[\cite{ref:and}]\label{thm:And}
A non-obtuse almost simple polyhedron of finite volume with given dihedral angles, 
other than a tetrahedron or a triangular prism, exists in $\mathbb{H}^3$ 
if and only if the following conditions are satisfied$:$ 

$(a)$ if three faces meet at a proper vertex or an ideal vertex, then the sum of the dihedral angles 
between them is at least $\pi $ $( \pi $ for an ideal vertex$);$ 

$(b)$ if four faces meet at a proper vertex or an ideal vertex, then all the 
dihedral angles between them equal $\pi /2 ;$ 

$(c)$ if three faces are pairwise adjacent but share neither a proper vertex nor an ideal vertex, 
then the sum of the dihedral angles between them is less than $\pi ;$ 

$(d)$ if a face $F_i$ is adjacent to faces $F_j$ and $F_k$, 
while $F_j$ and $F_k$ are not adjacent but have a common ideal vertex which $F_i$ 
does not share, then at least one of the angles formed by 
$F_i$ with $F_j$ and with $F_k$ is different from $\pi /2 ;$ 

$(e)$ if four faces are cyclically adjacent but meet at neither a proper vertex nor an ideal vertex, 
then at least one of the dihedral angles between them is different from $\pi /2$. 
\end{thm}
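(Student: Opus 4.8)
The plan is to prove this as a realization theorem by the classical deformation (continuity) method, treating necessity and sufficiency separately. For necessity, I would assume a finite-volume hyperbolic polyhedron $P$ realizing the given combinatorics and angles exists and read off (a)--(e) from local geometry. Conditions (a) and (b) come from the link of a vertex: the link of a proper vertex is a spherical polygon whose interior angles are the incident dihedral angles, and whose angle sum must exceed that of the corresponding Euclidean polygon, with equality forcing the vertex to be ideal; for four faces the same comparison forces right angles. Conditions (c), (d), (e) are non-degeneracy statements about triples and cyclic quadruples of mutually adjacent faces that bound no vertex; I would verify them through the Gram matrix $G$ of outward unit normals, which for a finite-volume polyhedron in $\mathbb{H}^3$ has signature $(3,1)$ and whose principal minors indexed by faces meeting at a vertex degenerate in a prescribed way. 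The stated inequalities then fall out of the sign conditions these minors must satisfy when no vertex is present.

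For sufficiency I would fix the abstract combinatorial type $C$, a trivalent polyhedron (equivalently, by Steinitz, a $3$-connected planar graph with edge set $E$), and let $\mathcal{A}_C \subset (0,\pi/2]^{E}$ be the angle assignments satisfying (a)--(e). Writing $\mathcal{P}_C$ for the space of finite-volume hyperbolic polyhedra realizing $C$ up to isometry, I would study the map $\alpha : \mathcal{P}_C \to \mathbb{R}^{E}$ recording dihedral angles and aim to show it is a homeomorphism onto $\mathcal{A}_C$. This breaks into four steps: (i) show $\mathcal{A}_C$ is nonempty, connected, and a manifold-with-corners of the dimension predicted by the vertex constraints, so that $\alpha$ has a connected target of the right size; (ii) \emph{injectivity}, a hyperbolic Cauchy-type rigidity statement that realizations with equal dihedral angles are isometric; (iii) \emph{local homeomorphism}, namely infinitesimal rigidity, proved by showing the differential of $\alpha$ is nonsingular via the implicit function theorem, which reduces to a rank computation on the derivative of $G$; and (iv) \emph{properness} of $\alpha$. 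Given (i)--(iv), $\alpha$ is a proper local homeomorphism into a connected manifold, hence a covering onto its image; the image is then open, closed, and nonempty, so it equals $\mathcal{A}_C$, and injectivity upgrades the covering to a homeomorphism. The exclusion of the tetrahedron and triangular prism is needed because for those types the constraint system is degenerate and their degeneration analysis must be handled separately by hand.

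The main obstacle is step (iv), properness: along a sequence of polyhedra whose angles converge to a point of $\mathcal{A}_C$, one must prevent the polyhedron from collapsing --- two non-adjacent faces becoming tangent, an edge shrinking to length zero, or the body escaping to infinity in a way that loses a face. This is exactly where conditions (c), (d), and (e) are used: they are designed so that the limiting configuration of face half-spaces cannot develop a forbidden tangency or a spurious extra incidence. I expect the bulk of the effort to be a case analysis of the possible degenerations, bounding edge lengths from below using the strict inequalities in (c)--(e), while treating the ideal-vertex boundary of $\mathcal{A}_C$ separately, since there edge lengths legitimately run off to a finite-volume cusp rather than collapsing. This degeneration analysis is the part of Andreev's original argument that most needed to be made rigorous, and I would organize it around the same Gram-matrix bookkeeping used for injectivity and infinitesimal rigidity.
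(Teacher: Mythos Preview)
The paper does not prove this statement at all: Theorem~\ref{thm:And} is quoted from the literature (Andreev \cite{ref:and}, with the modern treatment in Roeder--Hubbard--Dunbar \cite{ref:roe}) and used as a black box to check that the polyhedra $P_n$ exist. There is nothing in the paper to compare your proposal against.

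That said, your outline is a reasonable high-level summary of the standard continuity-method proof, essentially the strategy carried out in \cite{ref:roe}: necessity from vertex links and Gram-matrix signature, sufficiency by showing the dihedral-angle map is a proper local homeomorphism onto the polytope cut out by (a)--(e), with Cauchy-type rigidity for injectivity and a degeneration analysis for properness. If you were writing this up in full you would need to be more careful than your sketch suggests in a couple of places---the target $\mathcal{A}_C$ is not quite a manifold-with-corners in the naive sense when ideal vertices are allowed, and the infinitesimal-rigidity step is subtler than a single rank computation---but as a plan it is on the right track. For the purposes of this paper, however, simply citing \cite{ref:and, ref:roe} is what the author does, and that is appropriate.
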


If three faces of a polyhedron are pairwise adjacent but share neither a proper vertex nor an ideal vertex, we call these trio faces are $3$-circuit. 
By Theorem \ref{thm:And} (c), we know that a $\pi /3$-equiangular polyhedron except for a triangular prism, does not have a $3$-circuit. 
However, a $\pi /3$-equiangular triangular prism does not exist in $\mathbb{H}^3$. We will mention this in Section 3. 
Thus there are no trio faces that are adjacent to each other without sharing a vertex. 

From now on, we show an example of the sequence $\{ P_n\} $ of $\pi /3$-equiangular polyhedra in $\mathbb{H}^3$. 
Fix an ideal vertex denoted by $v_0$. Then there are exactly three faces that share $v_0$. Denote these three faces by $A_1$, $A_2$ and  $A_3$. We assume that $A_k$ $(k=1,2,3)$ are $(n+2)$-gons. We also assume that the face which is adjacent to both $A_1$ and $A_2$ (resp. $A_2$ and $A_3$, $A_3$ and $A_1$) other than $A_3$ (resp. $A_1$, $A_2$) is a quadrilateral and denote it by $B_1$ (resp. $B_2$, $B_3$). 
Moreover, we assume that any faces which are adjacent only to one of the faces $A_i$ and $B_i$ ($i=1,2,3$) are pentagons. Note that the number of these faces is $3(n-2)$ for $n\geq 3$. 
Finally, we assume that the faces which are not adjacent to any $A_i$ $(i=1,2,3)$, are hexagons. We also remark that the number of these hexagons is $(n-3)(n-2)/2$ for $n\geq 3$. 
By Theorem \ref{thm:And}, the above combinatorial structure can be realized a $\pi /3$-equiangular polyhedron. Then we denote this polyhedron by $P_n$. 
Note that the number of faces (resp. vertices) of $P_n$ is $(n^2+n+6)/2$ (resp. $n^2+n+2$). 
Figs. 1, 3, 4, 5 and 6 show the combinatorial structures of $P_n$. 
We remark that the polyhedra $P_1$ is a regular tetrahedron, and $P_2$ is a regular cube. 
To image these polyhedra easily, the upper half-space model of $\mathbb{H}^3$ may be useful. 
Fig. 2 shows the image of the combinatorial structure of $P_1$ with the upper half-space model of $\mathbb{H}^3$. We set that $v_0$ as the point at infinity of the upper half-space and we look at this polyhedron from $v_0$ in this figure. Since $v_0$ is the point at infinity, $A_i$ $(i=1,2,3)$ are parts of vertical Euclidean planes in this model. Then any face other than $A_i$ is represented by an upper hemisphere which orthogonally intersects with $\partial \mathbb{H}^3$ as Euclidean sense. The polyhedron $P_1$ is a tetrahedron, then a face that is represented by an upper hemisphere is unique. We can easily confirm that any dihedral angles of it are $\pi /3$. 
We also remark that the dotted circles in Figs. 1, 3, 4, and 5 show the intersection of $\partial \mathbb{H}^3$ and the upper hemispheres which consist of faces of $P_n$. 

\begin{figure}[htb]
\begin{center}
\includegraphics[width=80mm]{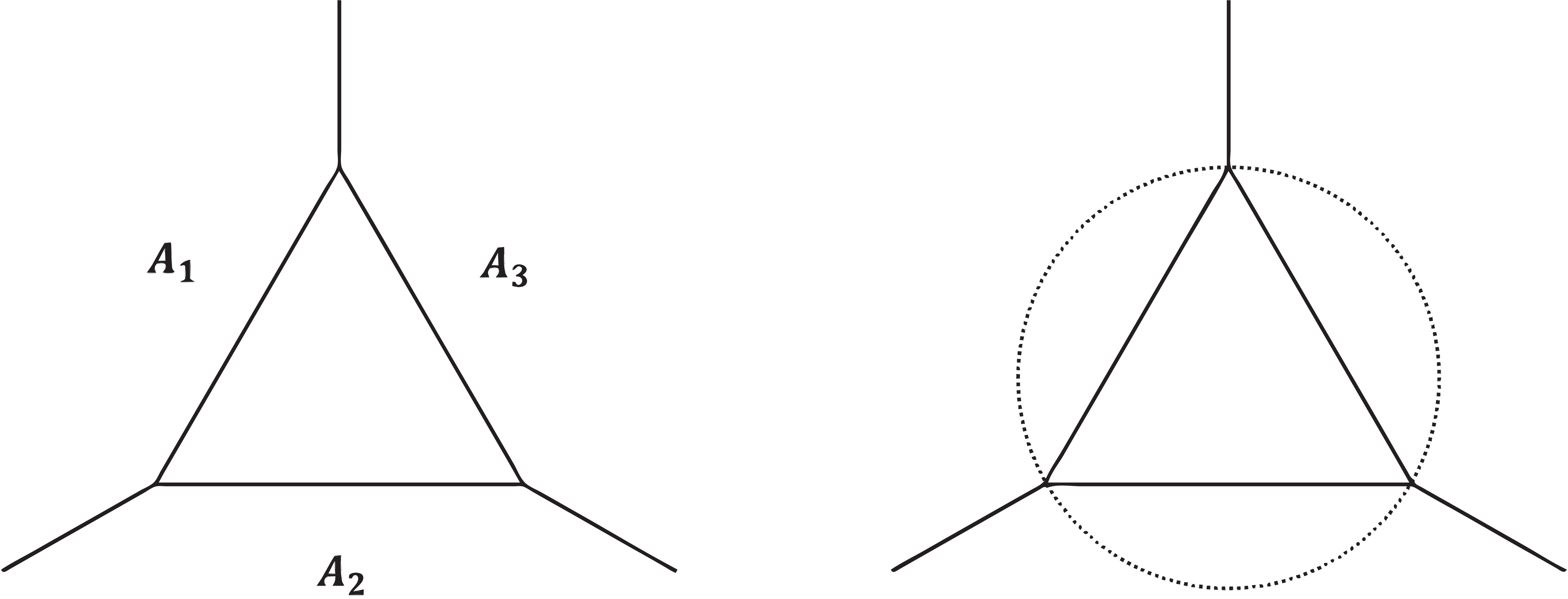}
\end{center}
\caption{The combinatorial structures of $P_1$}
\end{figure}

\begin{figure}[htb]
\begin{center}
\includegraphics[width=80mm]{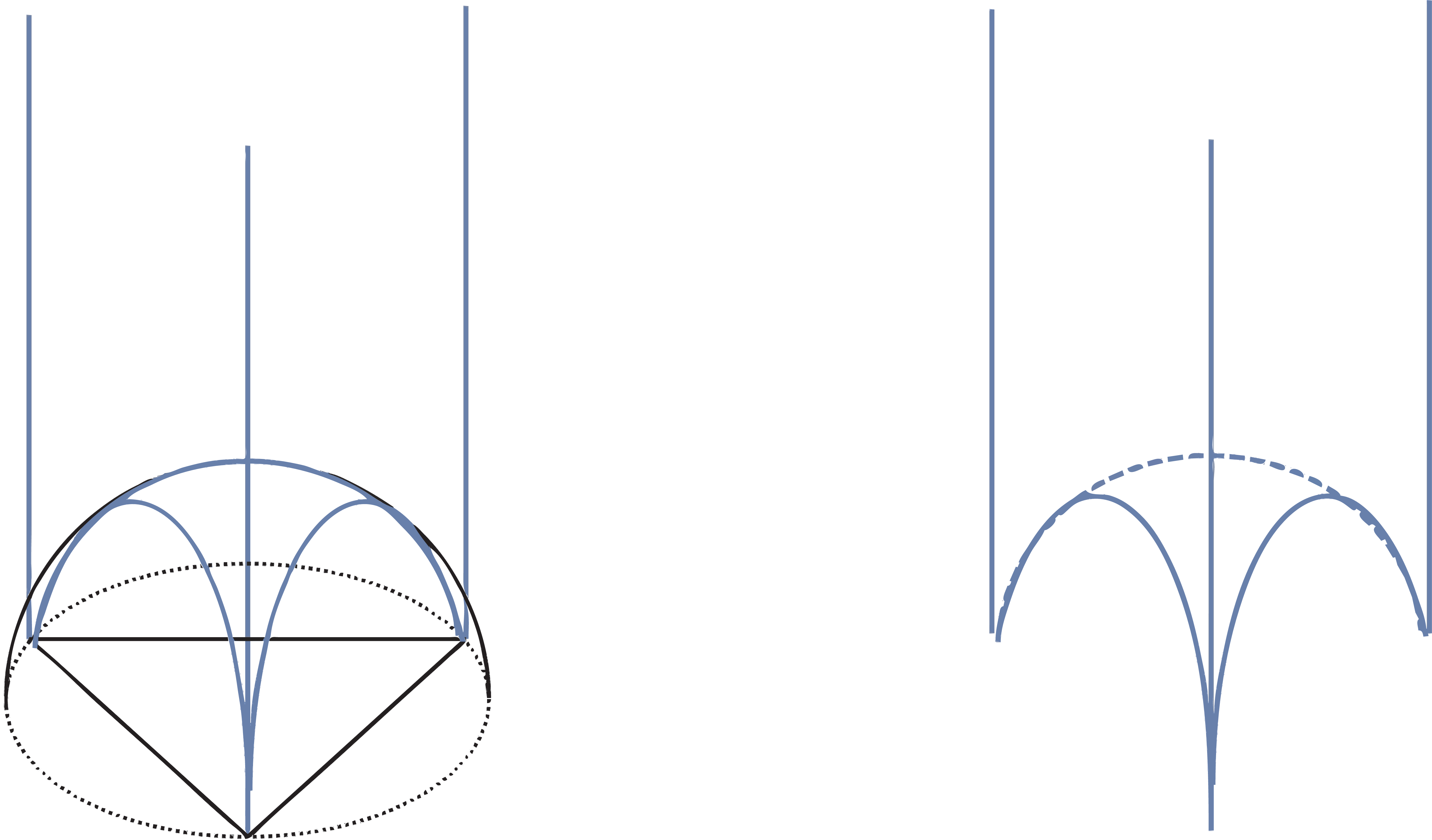}
\end{center}
\caption{Image of $P_1$ in upper half model of $\mathbb{H}^3$}
\end{figure}
\newpage 

\begin{figure}[htb]
\begin{center}
\includegraphics[width=110mm]{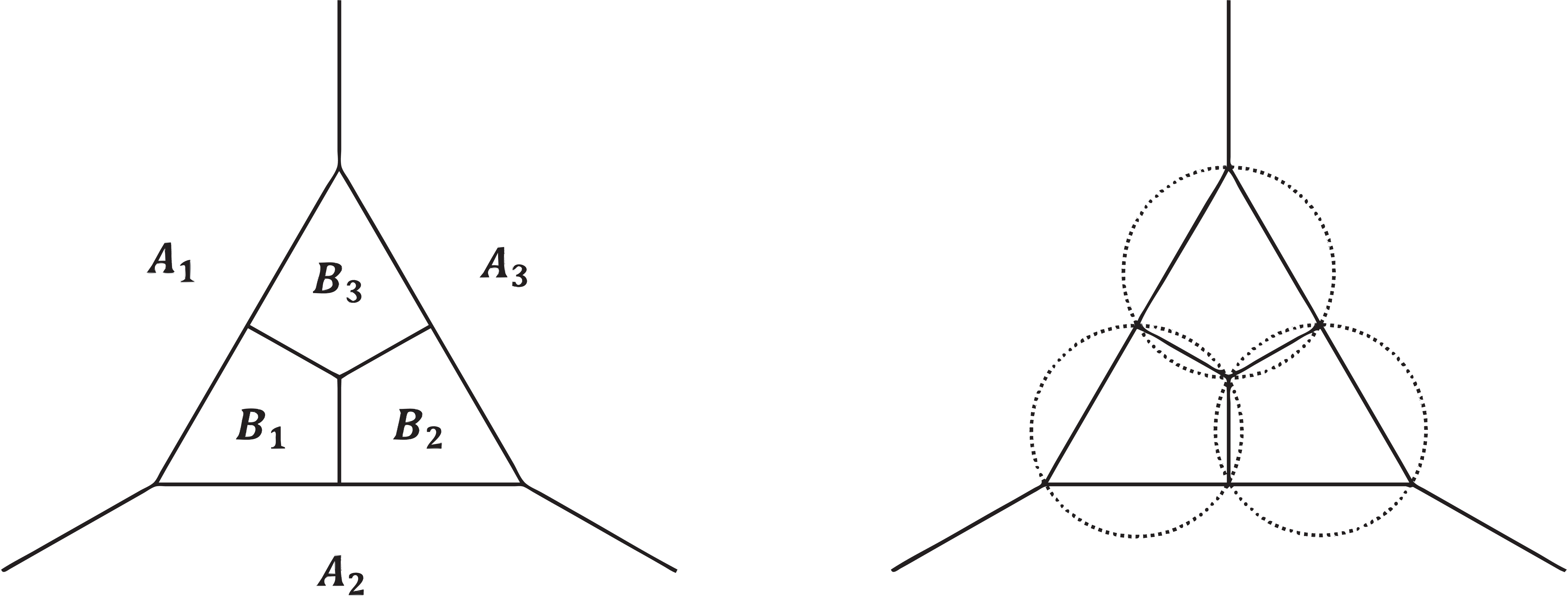}
\end{center}
\caption{The combinatorial structures of $P_2$}
\end{figure}

\begin{figure}[htb]
\begin{center}
\includegraphics[width=100mm]{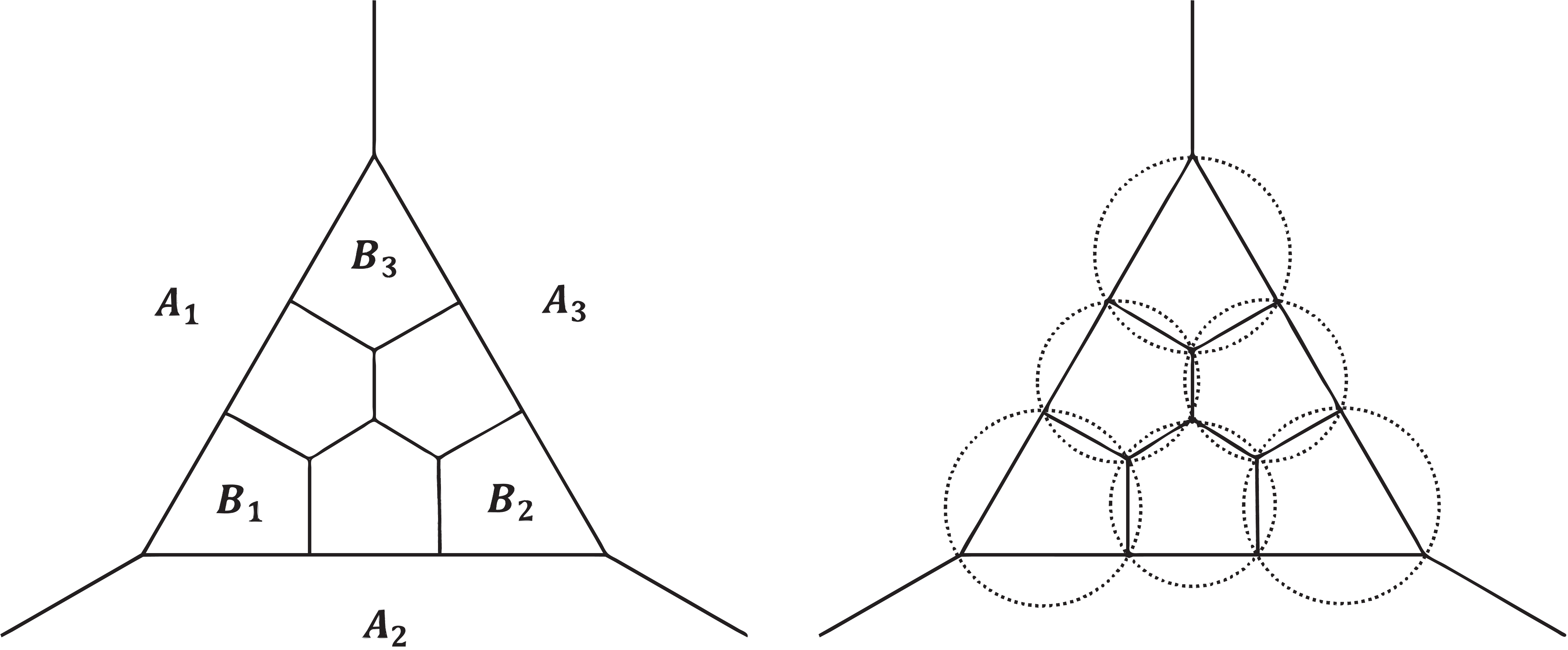}
\end{center}
\caption{The combinatorial structures of $P_3$}
\end{figure}

\begin{figure}[htb]
\begin{center}
\includegraphics[width=120mm]{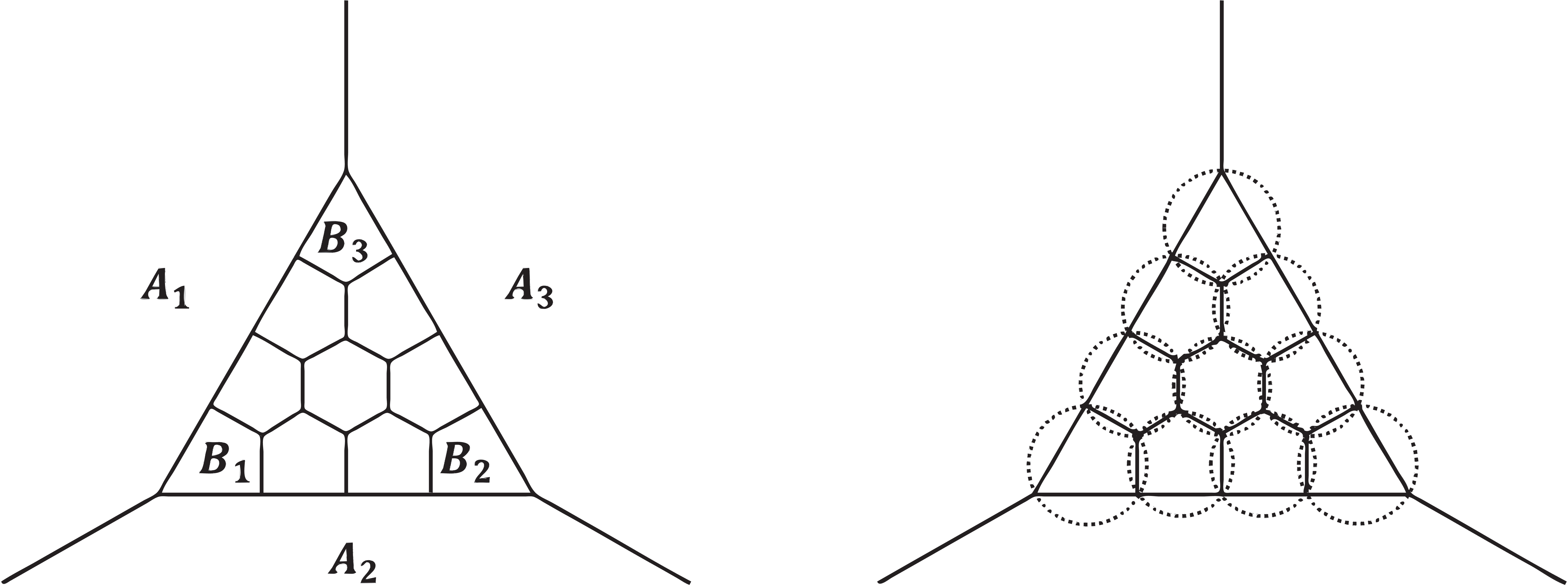}
\end{center}
\caption{The combinatorial structures of $P_4$}
\end{figure}
\newpage 

\begin{figure}[htb]
\begin{center}
\includegraphics[width=80mm]{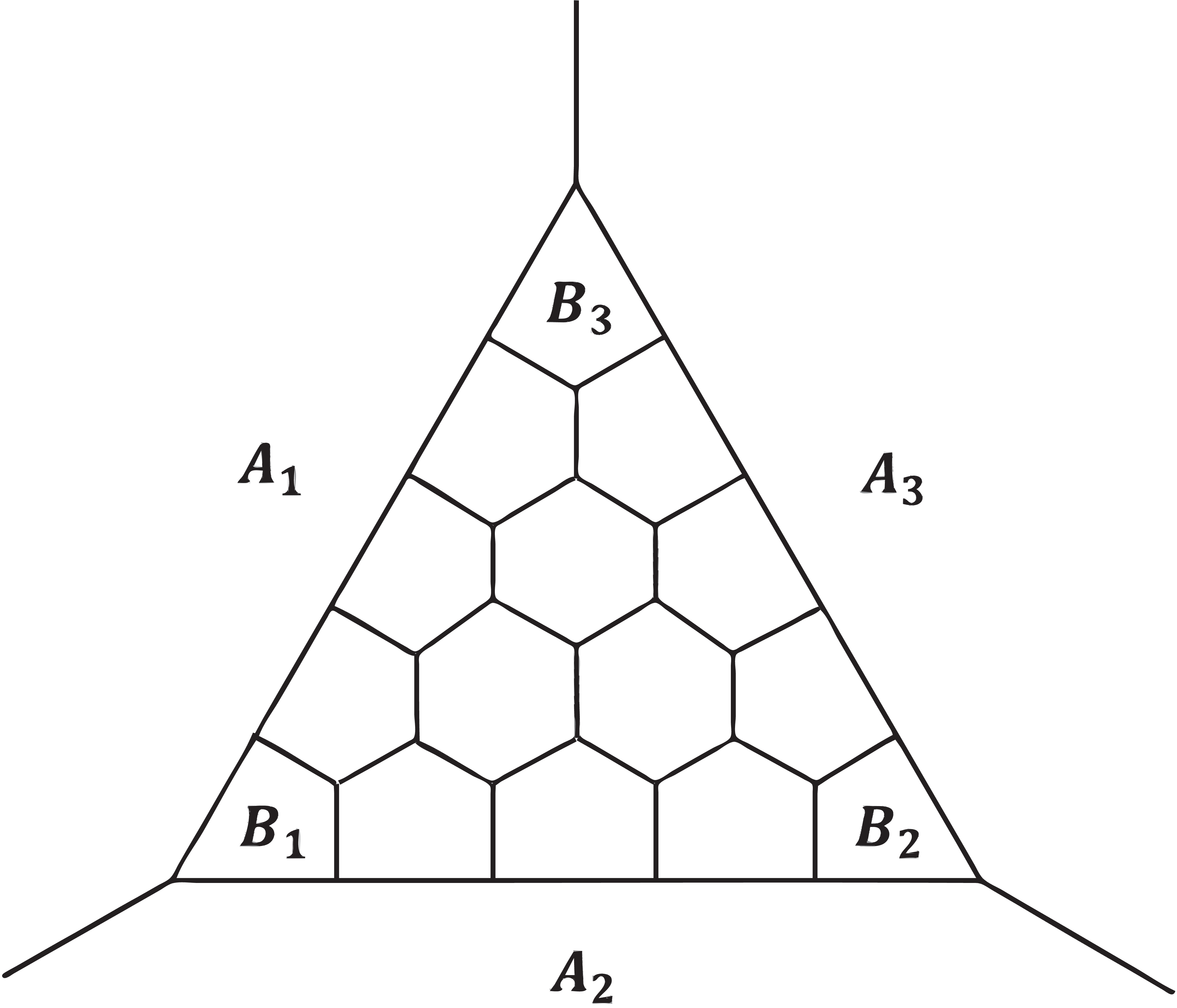}
\end{center}
\caption{The combinatorial structures of $P_5$}
\end{figure}

By this sequence of polyhedra, we see that there are infinitely many $\pi /3$-equiangular polyhedron in $\mathbb{H}^3$. 
We remark that there are also many $\pi /3$-equiangular polyhedron in $\mathbb{H}^3$ which depend on neither the sequence $\{ P_n\} $ 
nor the sequence that Atkinson introduced in \cite{ref:atk}. 
In the next section, we show an example of such a polyhedron. 

\section{Proof of Main Theorem}
In this section, we focus on the volumes of $\pi /3$-equiangular polyhedra in $\mathbb{H}^3$. 
Atkinson determined that the polyhedron which has the minimal volume among all $\pi /3$-equiangular hyperbolic polyhedra as follows. 

\begin{thm}[\cite{ref:atk}]\label{thm:Atk}
A three-dimensional $\pi /3$-equiangular hyperbolic polyhedron of minimal volume is $P_1$, up to isometry. 
\end{thm}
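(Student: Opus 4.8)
The plan is to combine the structural constraints already recorded in the introduction with a volume lower bound that grows with combinatorial complexity, so that the unique minimizer is forced to be the simplest admissible polyhedron. First I would reuse the reduction made above: any $\pi/3$-equiangular polyhedron $P$ is ideal, and by Andreev's theorem (Theorem \ref{thm:And}) exactly three faces meet at each vertex, so $P$ is simple. Writing $V,E,F$ for the numbers of vertices, edges and faces, three-valence gives $2E=3V$, and Euler's formula $V-E+F=2$ then yields $E=3V/2$ and $F=V/2+2$. In particular $V$ is even with $V\geq 4$, and $V=4$ is realized only by a tetrahedron.

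Second, I would pin down the tetrahedral case. An ideal tetrahedron is determined up to isometry by its dihedral angles, which occur in three opposite pairs of equal angles with vertex sums equal to $\pi$; imposing all six angles equal to $\pi/3$ leaves only the regular ideal tetrahedron, which is exactly $P_1$. Hence $V=4$ forces $P=P_1$ and nothing else, and $\mathrm{vol}(P_1)=3\Lambda(\pi/3)$, where $\Lambda$ denotes the Lobachevsky function.

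Third, and this is the technical heart, I would prove a lower bound $\mathrm{vol}(P)\geq\varphi(V)$ with $\varphi$ strictly increasing and $\varphi(V)>\mathrm{vol}(P_1)$ for every $V\geq 6$. The mechanism I would attempt is to decompose $P$ from a fixed ideal vertex (or maximal cusp) into ideal pieces indexed by the remaining faces, estimating each piece from below through the Lobachevsky function and using the Schl\"afli differential formula to control how the volume varies with the angle data; the target is a bound that is linear in $V$ (equivalently in $F$). Since the first admissible type after the tetrahedron, $V=6$, would be a triangular prism, which does not occur as a $\pi/3$-equiangular polyhedron (as noted in Section 2), every realizable $P$ with $V>4$ has $V\geq 8$, and monotonicity of $\varphi$ then gives $\mathrm{vol}(P)\geq\varphi(8)>\mathrm{vol}(P_1)$.

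The main obstacle is precisely this monotone lower bound. The family of $\pi/3$-equiangular polyhedra is noncompact and contains arbitrarily complicated members (the whole sequence $\{P_n\}$), so I cannot appeal to compactness, and I must rule out volume being lost to degenerating geometry in the limit. Making the per-face (or per-vertex) contribution \emph{uniformly} bounded below, rather than merely positive, is the delicate point, and is where the quantitative concavity and monotonicity of $\Lambda$ together with the rigidity of the ideal vertices, whose links are equilateral Euclidean triangles, would do the real work. Once that bound is in hand, uniqueness of the minimizer is immediate: equality can only hold at $V=4$, and there $P=P_1$ up to isometry.
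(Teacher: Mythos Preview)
Your proposal leaves the decisive step unproved: you acknowledge yourself that the monotone lower bound $\mathrm{vol}(P)\ge\varphi(V)$ with $\varphi(8)>\mathrm{vol}(P_1)$ is ``the main obstacle,'' and the mechanism you sketch (cusp decomposition, Lobachevsky estimates, Schl\"afli) is not actually carried out. In particular, the Schl\"afli differential formula has no obvious leverage here, since every dihedral angle is already pinned at $\pi/3$ and there is no natural one-parameter deformation of $P$ within the class to differentiate along; and producing a per-face or per-vertex volume contribution that is \emph{uniformly} bounded below across all combinatorial types is exactly the kind of statement that requires real work, not a plan. As written, the argument does not close.

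More to the point, the paper's proof bypasses all of this with a short geometric observation you have missed. Place any ideal vertex $v_0$ of $P$ at $\infty$ in the upper half-space model; the three faces through $v_0$ become vertical half-planes meeting pairwise at angle $\pi/3$. There is then a \emph{unique} hemisphere $\langle H\rangle$ meeting all three of these vertical planes at angle $\pi/3$, and every remaining face of $P$ lies on a hemisphere that is smaller and sits strictly below $\langle H\rangle$. Hence $\langle H\rangle$ slices $P$ into two nonempty pieces, and the piece containing $v_0$ is exactly a copy of $P_1$. This gives $\mathrm{vol}(P)>\mathrm{vol}(P_1)$ immediately for every $P\ne P_1$, with no Euler-characteristic bookkeeping, no Lobachevsky estimates, and no need to rule out the triangular prism separately.
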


This theorem can be proved as follows. 

\begin{proof}
Let $P$ be a $\pi /3$-equiangular polyhedron other than $P_1$. 
Since $P$ is ideal, we can set one of the vertices of $P$ at infinity of the upper half-space model. 
We denote this vertex by $v_0$. The vertex $v_0$ is shared by exactly three faces. Denote these three faces by $A_1$, $A_2$ and $A_3$. 
These three faces are represented by planes perpendicular to the boundary of $\mathbb{H}^3$. 
We also denote by $\langle H\rangle $ the upper hemisphere which crosses to these three faces with its dihedral angles are $\pi /3$. 
Any face of $P$ other than $A_1$, $A_2$ and $A_3$ is represented by a part of the hemisphere. 
This hemisphere is smaller than the hemisphere $\langle H\rangle $. 
Moreover, in Euclidean sense, the distance between $v_0$ and this hemisphere is larger than the distance between $v_0$ and $\langle H\rangle $. 
Thereby $P$ can be decomposed into two parts by $\langle H\rangle $. 
The part which has $v_0$ is $P_1$. 
Thus, the volume of $P$ is larger than that of $P_1$. 
\end{proof}

A lot of volume formulae of hyperbolic polyhedra are known. In this paper, we use the following formula for the volume of tetrahedra in $\mathbb{H}^3$ introduced by Milnor. 

\newfam\cyrfam
\font\tencyr=wncyr10
\def\cyr{\fam\cyrfam\tencyr}
\def\russianL{\mathop{\hbox{\cyr L}}}
\begin{prop}[\cite{ref:mil}]
Let $T(\alpha ,\beta , \gamma )$ be an ideal tetrahedron in $\mathbb{H}^3$ with its dihedral angles are $\alpha $, $\beta $ and $\gamma $
Then its volume is 
\begin{align*}
\hbox{vol}(T(\alpha ,\beta ,\gamma))={\russianL}(\alpha )+{\russianL}(\beta )+{\russianL}(\gamma )
\end{align*}
where 
\begin{align*}
{\russianL}(x)=
\frac{1}{2}\,\sum_{r=1}^{\infty}\frac{\sin(2rx)}{r^2}
=-\int\limits_0^{x}\,\log\mid2\sin t\mid dt\,,\,\,x\in\mathbb R\,,
\end{align*}
is Lobachevsky's function. 
\end{prop}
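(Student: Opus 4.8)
The plan is to realise $T=T(\alpha,\beta,\gamma)$ explicitly in the upper half-space model, to reduce its volume to a two-dimensional integral over a Euclidean triangle, and to evaluate that integral by cutting the triangle into right triangles, each of which will contribute a single value of the Lobachevsky function $\russianL$.

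First I would place one of the four ideal vertices of $T$ at the point at infinity $\infty$; the remaining three then lie in the Euclidean plane $\partial\mathbb{H}^3\setminus\{\infty\}$ and span a Euclidean triangle $\Delta$ with vertices $v_1,v_2,v_3$. The three faces of $T$ containing $\infty$ are the vertical half-planes lying over the three sides of $\Delta$, and, by conformality of the model, the dihedral angle of $T$ along the edge joining $\infty$ to $v_k$ is exactly the Euclidean interior angle of $\Delta$ at $v_k$. Hence these angles are $\alpha,\beta,\gamma$ (so, in particular, $\alpha+\beta+\gamma=\pi$), and, since opposite edges of an ideal tetrahedron carry equal dihedral angles, this pins down $T$ up to isometry. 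The fourth face of $T$ is the hemisphere over the circumcircle of $\Delta$, with centre $O$ and radius $R$; a point $(z,t)$ of the upper half-space lies in $T$ precisely when $z\in\Delta$ and $t\ge h(z)$, where $h(z)=\sqrt{R^{2}-|z-O|^{2}}$.

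Next I would integrate first in $t$, using $\int_{h}^{\infty}t^{-3}\,dt=\tfrac12 h^{-2}$, to obtain
\[
\mathrm{vol}(T)=\int_{\Delta}\frac{dA(z)}{2\bigl(R^{2}-|z-O|^{2}\bigr)}.
\]
Joining $O$ to $v_1,v_2,v_3$ splits $\Delta$ into three isosceles triangles, the one on the side $v_iv_j$ having apex angle at $O$ equal to twice the inscribed angle of $\Delta$ at the opposite vertex; dropping the perpendicular from $O$ onto each side halves each of them into two congruent right triangles. Thus it suffices to compute the integral over a right triangle $\tau$ having one vertex at $O$, hypotenuse of length $R$ along a radius, and angle $\theta$ at $O$. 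In polar coordinates centred at $O$ this integral becomes $-\tfrac12\int_{0}^{\theta}\log\!\bigl(1-\cos^{2}\theta/\cos^{2}\psi\bigr)\,d\psi$; applying the identity $\cos^{2}\psi-\cos^{2}\theta=\sin(\theta+\psi)\sin(\theta-\psi)$, the value $\int_{0}^{\pi/2}\log(2\sin t)\,dt=0$ (that is, $\russianL(\pi/2)=0$), and the duplication formula $\tfrac12\russianL(2\theta)=\russianL(\theta)-\russianL(\pi/2-\theta)$, I expect this to collapse to exactly $\russianL(\theta)$. Summing the six right-triangle pieces according to $\Delta=\Delta_1\cup\Delta_2\cup\Delta_3$ then gives $\mathrm{vol}(T)=\russianL(\alpha)+\russianL(\beta)+\russianL(\gamma)$.

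The one genuinely delicate point is the obtuse case: if an angle of $\Delta$ exceeds $\pi/2$, then $O$ lies outside $\Delta$, and one of the sub-triangles (or one of the perpendicular feet) falls outside, so the decomposition has to be read with signs. I would handle this either by checking that the polar-coordinate evaluation of the integral over $\tau$ remains valid as an identity of signed quantities for every $\theta\in(-\pi/2,\pi/2)$, or, more economically, by noting that both sides of the asserted formula are real-analytic in the positions of $v_1,v_2,v_3$ and agree on the open set of acute configurations, hence agree for every nondegenerate $T$; the degenerate limits (one angle tending to $0$) then follow by continuity, since $\russianL$ is continuous with $\russianL(0)=0$. The remaining ingredients are standard: the value $\int_0^{\pi/2}\log(2\sin t)\,dt=0$, the duplication formula (obtained by integrating $2\sin 2\theta=(2\sin\theta)(2\cos\theta)$ in logarithmic form), and the elementary trigonometric manipulations.
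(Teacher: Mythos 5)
The paper does not actually prove this proposition: it is imported verbatim from Milnor's survey with a citation and used purely as a black box to evaluate the volumes of the tetrahedra occurring in the decompositions of $P_n$ and of the pentagonal prism, so there is no in-paper argument to compare yours against. What you have written is, in outline, exactly the classical Lobachevsky--Milnor proof, and the outline is sound: the reduction to $\int_\Delta dA(z)/\bigl(2(R^2-|z-O|^2)\bigr)$ over the Euclidean triangle $\Delta$ spanned by the three finite vertices is correct, as is the circumcentric decomposition of $\Delta$ into six right triangles and the evaluation of each piece via $\cos^2\psi-\cos^2\theta=\sin(\theta+\psi)\sin(\theta-\psi)$, the value ${\russianL}(\pi/2)=0$, and the duplication formula. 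One bookkeeping error should be fixed before this is a complete proof: the inner integral $\int_h^\infty t^{-3}\,dt=\tfrac12 h^{-2}$ followed by $\int_0^{u}r\,dr/\bigl(2(R^2-r^2)\bigr)=-\tfrac14\log\bigl(1-u^2/R^2\bigr)$ gives the integral over a single right triangle as $-\tfrac14\int_0^\theta\log\bigl(1-\cos^2\theta/\cos^2\psi\bigr)\,d\psi$, not $-\tfrac12$ times that integral; carried through your own chain of identities, each right triangle contributes $\tfrac12{\russianL}(\theta)$, so that the six pieces sum to ${\russianL}(\alpha)+{\russianL}(\beta)+{\russianL}(\gamma)$ rather than twice this quantity. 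Your two proposed treatments of the obtuse case (a signed decomposition, or real-analytic continuation from the open set of acute configurations together with continuity of ${\russianL}$ at the degenerate limits) are both standard and both work.
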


By this formula, we obtain the volume 
\begin{align*}
vol(P_1)=vol(T(\pi /3,\pi /3,\pi /3))=3{\russianL}(\pi /3)\sim 1.014941. 
\end{align*}
From now on, we prepare some lemmas for comparing some volumes of $\pi /3$-equiangular polyhedra. 

\begin{lem}\label{lem:1} 
If an ideal tetrahedron in $\mathbb{H}^3$ has at least three dihedral angles which are $\pi /3$, then any dihedral angles of it are $\pi /3$.  
\end{lem}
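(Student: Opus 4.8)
The plan is to use the constraint that the dihedral angles at each ideal vertex of a tetrahedron must sum to $\pi$, together with the well-known fact that opposite edges of an ideal tetrahedron carry equal dihedral angles. First I would recall that an ideal tetrahedron in $\mathbb{H}^3$ has four vertices, six edges, and that the three edges meeting at any single ideal vertex have dihedral angles summing to $\pi$ (this is exactly condition (a) of Andreev's theorem, applied to an ideal vertex, and is the same Euclidean-similarity-structure fact used implicitly for $P_1$ in the excerpt). Label the three dihedral angles by $\alpha,\beta,\gamma$ with $\alpha+\beta+\gamma=\pi$, so that opposite edges get the same label; this is the standard parametrization underlying Milnor's formula $\mathrm{vol}(T(\alpha,\beta,\gamma))$.

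Next I would argue as follows. Suppose the tetrahedron has at least three edges whose dihedral angle equals $\pi/3$. Among the six edges there are exactly three pairs of opposite edges, and within each pair the two angles are equal. If two of the three $\pi/3$-edges happen to be an opposite pair, the remaining two pairs also each contribute $\alpha+\beta+\gamma=\pi$ with one of them $=\pi/3$; but then the third $\pi/3$-edge lies in one of those other two pairs, forcing that pair's angle to be $\pi/3$ as well, hence the last angle is $\pi - \pi/3 - \pi/3 = \pi/3$, and all six are $\pi/3$. If instead the three given $\pi/3$-edges lie in three distinct opposite-edge pairs — i.e. one from each pair — then $\alpha=\beta=\gamma=\pi/3$ directly, and again every edge has angle $\pi/3$. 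A short case check shows these are the only two possibilities once three edges are pinned to $\pi/3$, so in all cases the conclusion follows.

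I expect the only real subtlety is making the "opposite edges have equal dihedral angles" claim fully rigorous and citing it cleanly; everything after that is elementary bookkeeping. One clean way to see it: the link of each ideal vertex of $T$ is a Euclidean triangle whose angles are the three dihedral angles at that vertex, and the four vertex-links are pairwise similar (they are the four faces-at-infinity pattern of a single Euclidean triangle up to similarity), which forces the angle labels to be constant along opposite edges and to sum to $\pi$ at each vertex. With that in hand, the lemma is immediate from the arithmetic above, so the write-up can be kept to a few lines. An alternative, even shorter route is simply to invoke Andreev's theorem (Theorem \ref{thm:And}, condition (a)) at each of the four ideal vertices to get four equations $\theta_i+\theta_j+\theta_k=\pi$ and then do the linear algebra on the six unknowns; the main obstacle there is only to track which triples of edges meet at a common vertex, which is fixed by the combinatorics of the tetrahedron.
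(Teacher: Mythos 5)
Your proof is correct, but it takes a different route from the paper's. You rely on the standard structure theory of ideal tetrahedra --- opposite edges carry equal dihedral angles and the three angles at each ideal vertex sum to $\pi$, i.e.\ exactly the parametrization $T(\alpha,\beta,\gamma)$, $\alpha+\beta+\gamma=\pi$, that the paper itself quotes from Milnor --- and then reduce the lemma to a two-case count of how three marked edges can distribute over the three opposite-edge pairs ($2+1+0$ or $1+1+1$). The paper instead argues more directly: by pigeonhole some ideal vertex carries at least two of the three given $\pi/3$ angles, the angle sum $\pi$ at that vertex forces the third, and then placing that vertex at infinity makes the three faces through it vertical planes and the fourth face the unique hemisphere meeting them, whose three remaining dihedral angles are thereby determined to be $\pi/3$. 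The two arguments are close in spirit (both ultimately rest on the Euclidean similarity structure of the vertex links), but yours externalizes the geometric rigidity into the clean algebraic fact about opposite edges, which makes the case analysis purely arithmetic and arguably tighter than the paper's somewhat informal ``the rest three dihedral angles determine $\pi/3$''; the paper's version is more self-contained in that it only uses the upper half-space picture already set up in Section 2. One small caution: justify the angle-sum-$\pi$ claim by the horosphere cross-section (as you do in your similarity-structure remark) rather than by Andreev's condition (a), since the tetrahedron in the lemma is not assumed non-obtuse and Andreev's theorem is an existence criterion for non-obtuse polyhedra rather than a statement about arbitrary ideal tetrahedra.
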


\begin{proof}
By the assumption, the tetrahedron has a trio of faces at least two of three dihedral angles are $\pi /3$. 
Since the sum of the dihedral angles of these three faces is $\pi $, the rest dihedral angle is also $\pi /3$. 
Now we let the ideal vertex shared by those three faces be at infinity of upper half space. 
Then those three faces are included in planes that are perpendicular to the boundary of $\mathbb{H}^3$ in the Euclidean sense. 
In this case, the face which does not have the vertex at infinite of upper half-space must be a unique upper hemisphere, 
and then the rest three dihedral angles determine $\pi /3$. 
\end{proof}

We assume that a $\pi /3$-equiangular polyhedron is a triangular prism. Then we can describe the combinatorial structure as in Fig. 7. 
This prism consists of the faces $A_1$, $A_2$ and $A_3$ which share vertex $v_0$ which is the point at infinity, and the faces $B$ and $B'$ which are the parts of the upper hemispheres. 
Note that the bases of this prism are $A_2$ and $B$. 
By Lemma \ref{lem:1}, a tetrahedron whose vertices are $v_0$, $v_3$, $v_4$ and $v_5$ is a $\pi /3$-equiangular polyhedron. 
This tetrahedron is obtained by decomposing the triangular prism by the halfplane which passes three vertices $v_0$, $v_4$ and $v_5$. This means that the dihedral angle between the face $B$ and the face $B'$ is greater than $\pi /3$. Thus, there is no $\pi /3$-equiangular triangular prism. 

\begin{figure}[htb]
\begin{center}
\includegraphics[width=50mm]{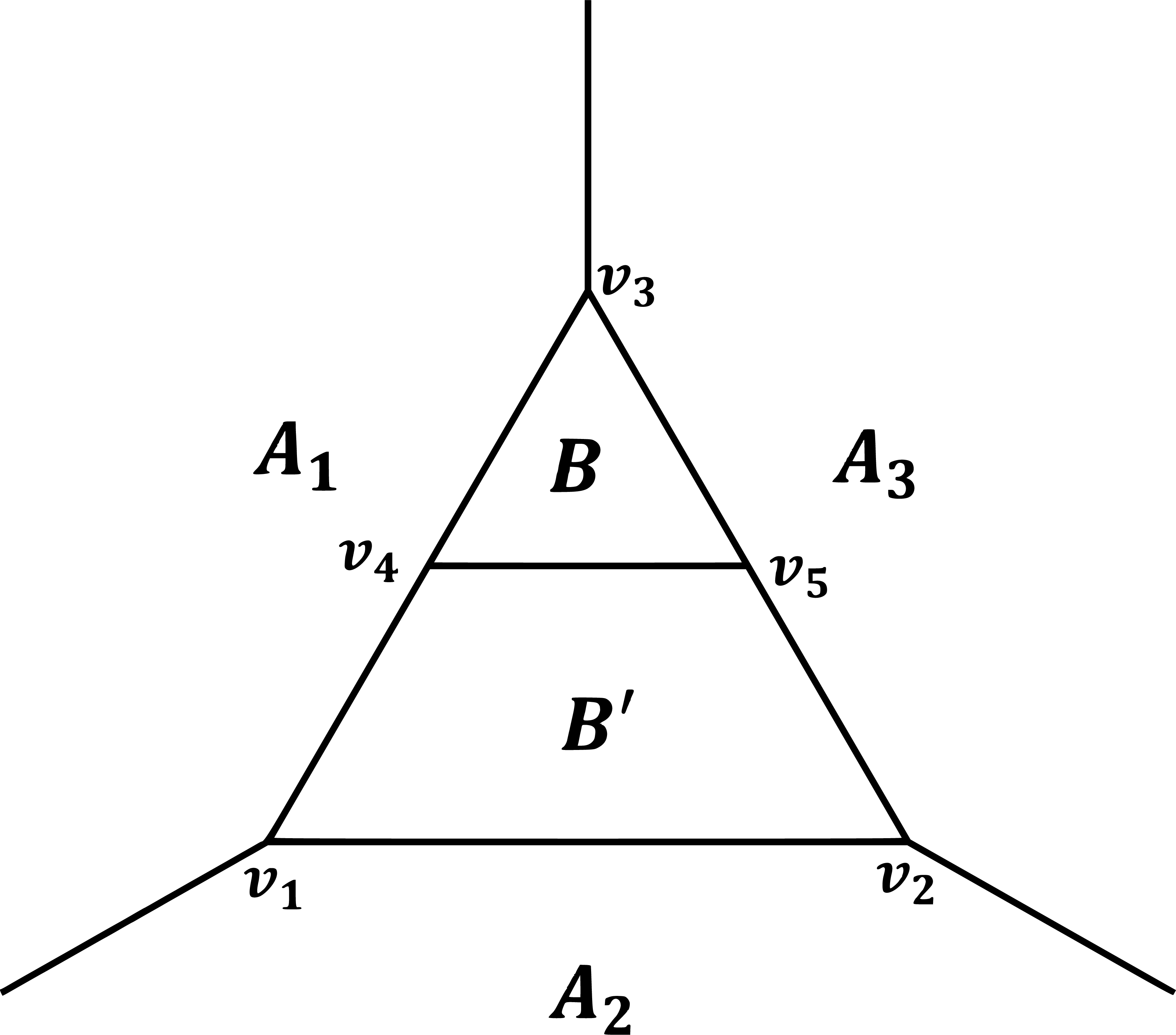}
\end{center}
\caption{The combinatorial structures of a triangular prism}
\end{figure}

From now on, we denote by $A_i$ and $B_i$ $(i=1,2,3)$ the faces of $P_2$ and by $v_0$ the vertex of $P_2$ as before. 
We also denote the vertices of $P_2$ by $v_k$ $(k=1,2,\cdots ,7)$ as in Fig. 3. 
Then, we can break down $P_2$ into five tetrahedra: $v_0v_1v_2v_3$, $v_1v_2v_5v_7$, $v_2v_3v_6v_7$, $v_1v_3v_4v_7$, $v_1v_2v_3v_7$. 
We can confirm that each of these five tetrahedra has at least three dihedral angles which are $\pi /3$. 
Thus these are isometry to $P_1$ by Lemma \ref{lem:1}. 
This means that the volume $vol(P_1)$ of $P_1$ and that $vol(P_2)$ of $P_2$ have the relation; 
\[ vol(P_2)=5vol(P_1). \]

Now we add one more Lemma giving  a sufficient condition that a polyhedron is an isometry to $P_1$. 
\begin{lem}\label{lem:2}
Let $P$ be a $\pi /3$-equiangular polyhedron in $\mathbb{H}^3$. 
If $P$ has a triangular face, then $P$ must be $P_1$. 
\end{lem}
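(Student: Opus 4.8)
The plan is to exploit the structure forced by Andreev's theorem at an ideal vertex, exactly as in the discussion of the triangular prism. Suppose $P$ is a $\pi/3$-equiangular polyhedron with a triangular face $T$. First I would place one of the three ideal vertices of $T$ at infinity in the upper half-space model; call it $v_0$. The three faces meeting at $v_0$ are vertical Euclidean half-planes, and every other face of $P$ is a hemisphere orthogonal to $\partial\mathbb{H}^3$. Since $T$ has only three edges and one of its vertices is $v_0$, the face $T$ must be one of the three faces through $v_0$ (a vertical half-plane) — otherwise $T$ would be a hemisphere, but a hemisphere face cannot contain the point $v_0=\infty$. So relabel so that $T=A_1$, and let $A_2,A_3$ be the other two faces through $v_0$, and let $B,B'$ be the two hemisphere faces adjacent to the two edges of $A_1$ other than the one shared with, say, $A_2\cap A_3$. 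Wait — more carefully: $A_1$ is a triangle, so it has exactly three sides; two of them are the edges $A_1\cap A_2$ and $A_1\cap A_3$ emanating from $v_0$, and the third side is an edge $A_1\cap C$ for some hemisphere face $C$. This already shows $A_1$ has exactly three neighbours $A_2,A_3,C$.

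Next I would chase the combinatorics around the other two vertices of $A_1$. The vertex $A_1\cap A_2\cap C$ is ideal, so the dihedral angles there sum to $\pi$, forcing $A_2$ and $C$ to be adjacent along an edge from that vertex; similarly $A_3$ and $C$ are adjacent. Thus $A_2$ and $A_3$ are each adjacent to $A_1$, to each other (at $v_0$), and to $C$, and $C$ is adjacent to all of $A_1,A_2,A_3$. The four faces $A_1,A_2,A_3,C$ are pairwise adjacent. Now I would argue that these four faces, together with their six edges and four vertices, already constitute a closed combinatorial sphere: each of the three vertices other than $v_0$ lies on $A_i$, $A_j$, and $C$ for the appropriate pair, and at $v_0$ the faces are $A_1,A_2,A_3$. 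Since a convex polyhedron's boundary is a sphere and this sub-configuration is already a sphere, it must be all of $P$. Hence $P$ is an ideal tetrahedron with dihedral angle $\pi/3$ at three (in fact all) edges, so by Lemma~\ref{lem:1} it is the regular ideal tetrahedron, i.e. $P=P_1$.

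The main obstacle is the combinatorial step: rigorously concluding that once $A_1,A_2,A_3,C$ are pairwise adjacent and close up around the four vertices, no further faces, edges, or vertices can exist. The clean way to do this is to invoke that $P$ is simple at every proper vertex and, being ideal, has exactly three faces at every ideal vertex too (from the Andreev-angle computation $2\pi=m(\pi-\pi/3)$ giving $m=3$, as in the introduction), so that $P$ is in fact simple; then the dual graph considerations or a direct Euler-characteristic count on the partial structure force closure. Concretely: a simple polyhedron in which four faces are mutually adjacent and the induced subgraph on those faces is the complete graph $K_4$, which is a triangulation of $S^2$, cannot be enlarged, because adding any face would have to split one of the existing faces or create a vertex of valence exceeding three. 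I would present this last point carefully, since it is where the "triangular face" hypothesis does its real work — it is what makes $A_1$ a vertical half-plane with exactly one hemisphere neighbour, kicking off the whole closure argument. Alternatively, and perhaps more in the spirit of the paper's earlier arguments, I would note that any face of $P$ other than $A_1,A_2,A_3$ is a hemisphere strictly smaller than and strictly farther from $v_0$ than the unique hemisphere $\langle H\rangle$ cutting $A_1,A_2,A_3$ at angle $\pi/3$; since $C$ already realizes $\langle H\rangle$, there is no room for another hemisphere face, so $P$ has exactly four faces and is the tetrahedron $P_1$.
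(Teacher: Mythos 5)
There is a genuine gap at the closure step. Your setup agrees with the paper's: you correctly identify that the triangle $A_1$ has exactly three neighbours $A_2$, $A_3$, $C$, and that the two vertices of $A_1$ other than $v_0$ force $C$ to be adjacent to both $A_2$ and $A_3$, so the four faces are pairwise adjacent. But from pairwise adjacency you then conclude that the configuration ``already constitutes a closed combinatorial sphere,'' asserting that \emph{each} of the three vertices other than $v_0$ lies on two of the $A_i$ and on $C$. Only two of those incidences are established (the two far endpoints of $A_1\cap A_2$ and $A_1\cap A_3$); the existence of a vertex $A_2\cap A_3\cap C$ is exactly what must be proved, and pairwise adjacency does not give it. Your general combinatorial claim --- that a simple polyhedron with four mutually adjacent faces, one of them a triangle whose only neighbours are the other three, cannot be enlarged --- is false: the triangular prism is a counterexample (the triangle $T$ and the three quadrilaterals are pairwise adjacent, $T$ has exactly those three neighbours, yet there is a fifth face). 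What rules out precisely this prism-like extension is Andreev's condition (c) of Theorem \ref{thm:And}: a $\pi/3$-equiangular polyhedron (other than a triangular prism, which is excluded separately) has no $3$-circuit. The paper's proof uses exactly this: if $A_2$ is not a triangle, the far endpoint of $A_2\cap A_3$ is not on $C$, so $A_2$, $A_3$, $C$ are pairwise adjacent with no common vertex, a forbidden $3$-circuit; hence $A_2$ and $A_3$ are triangles and $P$ closes up as the tetrahedron $P_1$. Your proposal never invokes the no-$3$-circuit condition, and this is the missing idea.

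Your alternative geometric closing (``$C$ already realizes $\langle H\rangle$, so there is no room for another hemisphere face'') points in a workable direction but is not carried out. It requires showing that a hemisphere meeting all three vertical planes at angle $\pi/3$ is unique (true, since its centre must be equidistant from the three boundary lines), and then showing that no further supporting hemisphere can cut $P$ near the corner $\partial A_2\cap\partial A_3$ without either adding an edge to the triangle $A_1$ or pushing the vertical edge $A_2\cap A_3$ inside the ball bounded by $\langle H\rangle$. Making that rigorous amounts to a Euclidean computation on the triangle formed by $\partial A_1$, $\partial A_2$, $\partial A_3$ (forcing it to be equilateral), or else collapses back onto the $3$-circuit argument. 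As written, neither of your two closing routes is a proof.
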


\begin{proof}
Denote by $A_1$, $A_2$, and $A_3$ the faces which share a vertex of $P$. We also denote by $v_0$ this vertex. 
Without a loss of generality, we can suppose that $A_1$ is a triangle. 
Then there is exactly one face that is adjacent to $A_1$ neither $A_2$ nor $A_3$. 
Denote this face by $B_1$. We note that $B_1$ is also adjacent both to $A_2$ and $A_3$ since $A_1$ is a triangle. 
We also note that $B_1$ does not have $v_0$. 
It is obvious that $P$ is $P_1$ when both $A_2$ and $A_3$ are triangles. 
Thus, we need to consider the case that at least one of $A_2$ and $A_3$ is not a triangle. 

We suppose that $A_2$ is not a triangle. 
Then the endpoint of the edge $A_2\cap A_3$ other than $v_0$ cannot be shared by $B_1$. 
This endpoint is shared by $A_2$, $A_3$, and a face is neither $A_1$ nor $B_1$.  
This means that $A_2$, $A_3$, and $B_1$ do not share an ideal vertex, and then there is a $3$-circuit. 
A contradiction. Thus, $A_2$ is a triangle. 
In the same manner as before, we can show that $A_3$ is also a triangle. 

Hence $P$ must be $P_1$. 
\end{proof}

\begin{lem}
Let $P$ be a $\pi /3$-equiangular polyhedron in $\mathbb{H}^3$. 
If $P$ has three quadrilateral faces which share one ideal vertex, then $P$ must be $P_2$. 
\end{lem}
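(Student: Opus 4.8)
The plan is to pass to the upper half-space model with the given common ideal vertex placed at $\infty$ and show that the hypothesis rigidly forces $P$ to be the cube $P_2$. I first record the standing facts. Since $P$ is $\pi/3$-equiangular it is ideal, exactly three faces meet at each of its ideal vertices, $P$ is not a triangular prism (as we saw above), hence by Theorem \ref{thm:And} (c) it contains no $3$-circuit, and by Lemma \ref{lem:2} no face of $P$ is a triangle (otherwise $P=P_1$, but the tetrahedron $P_1$ has no three quadrilateral faces through a common vertex, so $P\neq P_1$). Let $A_1,A_2,A_3$ be the three given quadrilateral faces at the ideal vertex $v_0$; set $v_0=\infty$, and let $v_1=A_2\cap A_3$, $v_2=A_3\cap A_1$, $v_3=A_1\cap A_2$ be the other endpoints of the three edges at $v_0$. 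Then $A_1,A_2,A_3$ lie in vertical Euclidean planes whose traces $\ell_1,\ell_2,\ell_3$ on $\partial\mathbb{H}^3$ meet pairwise at angle $\pi/3$ and therefore bound an equilateral triangle $\Delta$ with vertices $v_1,v_2,v_3$, and convexity puts $P$ inside the vertical prism over $\Delta$. As $A_i$ is a quadrilateral having $v_0$ as one vertex, and all vertices of $P$ are ideal, the fourth vertex $m_i$ of $A_i$ lies on the side of $\Delta$ carried by $\ell_i$, strictly between its endpoints.

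I next bring in the second layer of faces. Let $B_1$ be the third face at $v_3$, $B_2$ the third face at $v_1$, $B_3$ the third face at $v_2$. Tracing the four edges of $A_1$, whose vertices in cyclic order are $v_0,v_2,m_1,v_3$, one reads off that the edge $A_1\cap B_3$ joins $v_2$ to $m_1$ and the edge $A_1\cap B_1$ joins $m_1$ to $v_3$; similarly the edge $A_2\cap B_1$ joins $v_3$ to $m_2$, and so on. Hence each $B_i$ is represented by a hemisphere whose equatorial circle is determined by three points lying on it: $\langle b_1\rangle$ through $v_3,m_1,m_2$, $\langle b_2\rangle$ through $v_1,m_2,m_3$, $\langle b_3\rangle$ through $v_2,m_3,m_1$. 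Using the absence of $3$-circuits and of triangular faces I check that $A_1,A_2,A_3,B_1,B_2,B_3$ are six distinct faces; for example, if $B_1=B_2$ then $B_1$ would be adjacent to both $A_1$ and $A_3$, so the pairwise-adjacent trio $A_1,A_3,B_1$ would be forced to share an ideal vertex, which makes $A_1\cap B_1$ the edge joining $v_2$ and $v_3$ and hence $A_1$ a triangle, a contradiction.

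The heart of the argument is a rigidity computation. Along the edge $A_1\cap B_1$ the dihedral angle is $\pi/3$, so $\langle b_1\rangle$ meets the line $\ell_1$ at angle $\pi/3$; likewise it meets $\ell_2$ at $\pi/3$. By the tangent--chord relation at $v_3$, the chords $v_3m_1$ and $v_3m_2$ of $\langle b_1\rangle$ each subtend an arc of $2\pi/3$, so $v_3,m_1,m_2$ cut $\langle b_1\rangle$ into three equal arcs; thus the triangle $v_3m_1m_2$ is equilateral, and in particular $|v_3m_1|=|v_3m_2|$. Carrying this out at all three corners of $\Delta$ and using that $|v_2m_1|+|v_3m_1|$, $|v_1m_2|+|v_3m_2|$ and $|v_1m_3|+|v_2m_3|$ all equal the side length of $\Delta$ forces $m_1,m_2,m_3$ to be the midpoints of the sides of $\Delta$. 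Then $\langle b_1\rangle$ is the circumcircle of the triangle $v_3m_1m_2$, which is the image of $\Delta$ under the homothety of ratio $1/2$ centred at $v_3$; since $v_3$ lies on the circumcircle of $\Delta$, whose centre is the centroid $s$ of $\Delta$, this image circle passes through $s$. The same holds for $\langle b_2\rangle$ and $\langle b_3\rangle$, so any two of $\langle b_1\rangle,\langle b_2\rangle,\langle b_3\rangle$ meet at $s$ and at exactly one $m_i$; hence $s$ is a common ideal vertex of $B_1,B_2,B_3$, each $B_i$ is a quadrilateral, and $B_1$ has vertices $v_3,m_1,s,m_2$ with edges shared, in cyclic order, with $A_1,B_3,B_2,A_2$. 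A short computation of the angle between, say, $\langle b_1\rangle$ and $\langle b_3\rangle$ (which meet at $m_1$ and $s$) yields $\pi/3$, so every dihedral angle of the configuration equals $\pi/3$.

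It remains to see that $P$ has no further faces. The six faces $A_1,A_2,A_3,B_1,B_2,B_3$ form a set closed under adjacency: each is a quadrilateral whose four neighbours, listed above, again lie in the set. Since the face-adjacency graph of $P$ is connected, these six are all the faces of $P$; therefore $P$ is the intersection of exactly the six explicitly described half-spaces, which is a combinatorial cube with all dihedral angles equal to $\pi/3$, i.e. $P_2$. I expect the main obstacle to be the rigidity computation of the third paragraph: one must take care that the points $m_i$ lie on the correct sides of $\Delta$ and in the correct cyclic order along each $\langle b_i\rangle$ for the tangent--chord bookkeeping to be valid, and one must verify the incidences that make $\{A_j,B_j\}_{j=1,2,3}$ closed under adjacency. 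The distinctness of the six faces and the evaluation of the $B_i$--$B_j$ dihedral angle are then routine.
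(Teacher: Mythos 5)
Your proof is correct, but it takes a genuinely different route from the paper's. The paper's argument is purely combinatorial and very short: since each $A_i$ is a quadrilateral, the two edges $A_1\cap B_1$ and $A_1\cap B_3$ of $A_1$ must share the fourth vertex of $A_1$, so $B_1,B_2,B_3$ are mutually adjacent; the absence of a $3$-circuit then forces $B_1,B_2,B_3$ to share an ideal vertex, which pins down the combinatorial structure as that of the cube, and the identification $P=P_2$ then rests on the uniqueness part of Andreev's theorem. You instead carry out an explicit Euclidean rigidity computation in the upper half-space model: the tangent--chord relation at each corner of the equilateral triangle $\Delta$ forces the $m_i$ to be midpoints, the three circles $\langle b_i\rangle$ to pass through the centroid, and hence the six supporting half-spaces to be completely determined; closure of the six faces under adjacency then shows there are no further faces. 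What your approach buys is independence from Andreev uniqueness --- you reconstruct the cube metrically and see its rigidity directly --- at the cost of the coordinate bookkeeping you rightly flag (orientation of the $\pi/3$ angles at each intersection, the cyclic order of $v_3,m_1,m_2$ on $\langle b_1\rangle$, and the verification that the $B_i\cap B_j$ edges terminate at the centroid because all vertices of $P$ are ideal). The paper's version buys brevity but silently invokes the uniqueness statement to pass from ``same combinatorial structure'' to ``isometric to $P_2$''. Both arguments are sound; note also that your key combinatorial observation (the edges $A_1\cap B_1$ and $A_1\cap B_3$ meet at $m_1$, so the $B_i$ are pairwise adjacent) is exactly the paper's first step, so the divergence is only in how one finishes from there.
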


\begin{proof}
Denote by $A_1$, $A_2$ and $A_3$ the quadrilateral faces share one ideal vertex of $P$. We also denote by $v_0$ this vertex. 
Denote by $B_1$ (resp. $B_2$, $B_3$) the face which is adjacent both $A_1$ and $A_2$ (resp. $A_2$ and $A_3$, $A_3$ and $A_1$). 
Since $A_1$, $A_2$ and $A_3$ are quadrilaterals, then $B_1$, $B_2$, and $B_3$ are mutually adjacent. 
Then $B_1$, $B_2$, and $B_3$ share an ideal vertex of $P$ since $P$ does not have $3$-circuit. 
This means that the combinatorial structure of $P$ coincides with that of $P_2$. 
Hence $P$ is $P_2$. 
\end{proof}

Now we consider the combinatorial structure of a $\pi /3$-equiangular polyhedron other than $P_1$ and $P_2$. 
Let denote this polyhedron by $P$. 
The combinatorial structure of $P$ must satisfy the following rules; 

(1) any vertex is shared by exactly three faces

(2) there is no triangle

(3) there is no $3$-circuit

(4) any two faces can be adjacent by only one edge. \\ 

Let $v(P)$, $e(P)$ and $f(P)$ be the number of ideal vertices, edges and faces of $P$. 
Then by rule (1), we obtain 
\[ 2e(P)=3v(P).\] 
We also obtain Euler's identity
\[ v(P)-e(P)+f(P)=2.\] 
Then by these two identities, we obtain 
\[ f(P)=\frac{v(P)}{2}+2.\]
This identity shows that $v(P)$ is even. 
It is clear that $v(P)\geq 10$. 
We remark that by the rules (1)-(4) and the above identities, we may consider the case $v(P)=4$, $8$. 
But if $v(P)=4$ (resp. $v(P)=8$), then $P$ must be $P_1$ (resp. $P_2$). 
Note that we remove the case that $P$ is both of these polyhedra. 

From now on, we consider the case that $v(P)=10$. 
In this case, the combinatorial structure of $P$ must be as Fig. 8. 
We denote by $v_0$ an ideal vertex which is shared by three faces $A_1$, $A_2$ and $A_3$ as in Fig. 8. 
We also denote the faces by $B_i$ $(i=1,2,3)$ and the ideal vertices $v_j$ $(j=1,2,\cdots ,9)$ as in Fig. 8. 
Note that $P$ is a pentagonal prism in this case. 
We denote by $\langle H_1\rangle $ (resp. $\langle H_2\rangle $, $\langle H_3\rangle $ and $\langle H_4\rangle $), by the halfplane which passes through trio ideal vertices $(v_0,v_4,v_5)$ (resp. $(v_0,v_6,v_7)$, $(v_0,v_4,v_7)$ and $(v_4,v_5,v_9)$). 
Then, we decompose $P$ by these four halfplane $\langle H_k\rangle $ $(k=1,2,3,4)$, 
we obtain four tetrahedra $v_0v_1v_4v_5$, $v_0v_2v_6v_7$, $v_0v_3v_4v_7$, $v_4v_5v_8v_9$ and a rest part of $P$.  
By Lemma, these four tetrahedra are congruent to $P_1$. 
Let denote that the rest part of $P$ by $P'$. 
We can see the combinatorial structure of $P'$ in Fig. 9. 

\begin{figure}[htb]
\begin{center}
\includegraphics[width=60mm]{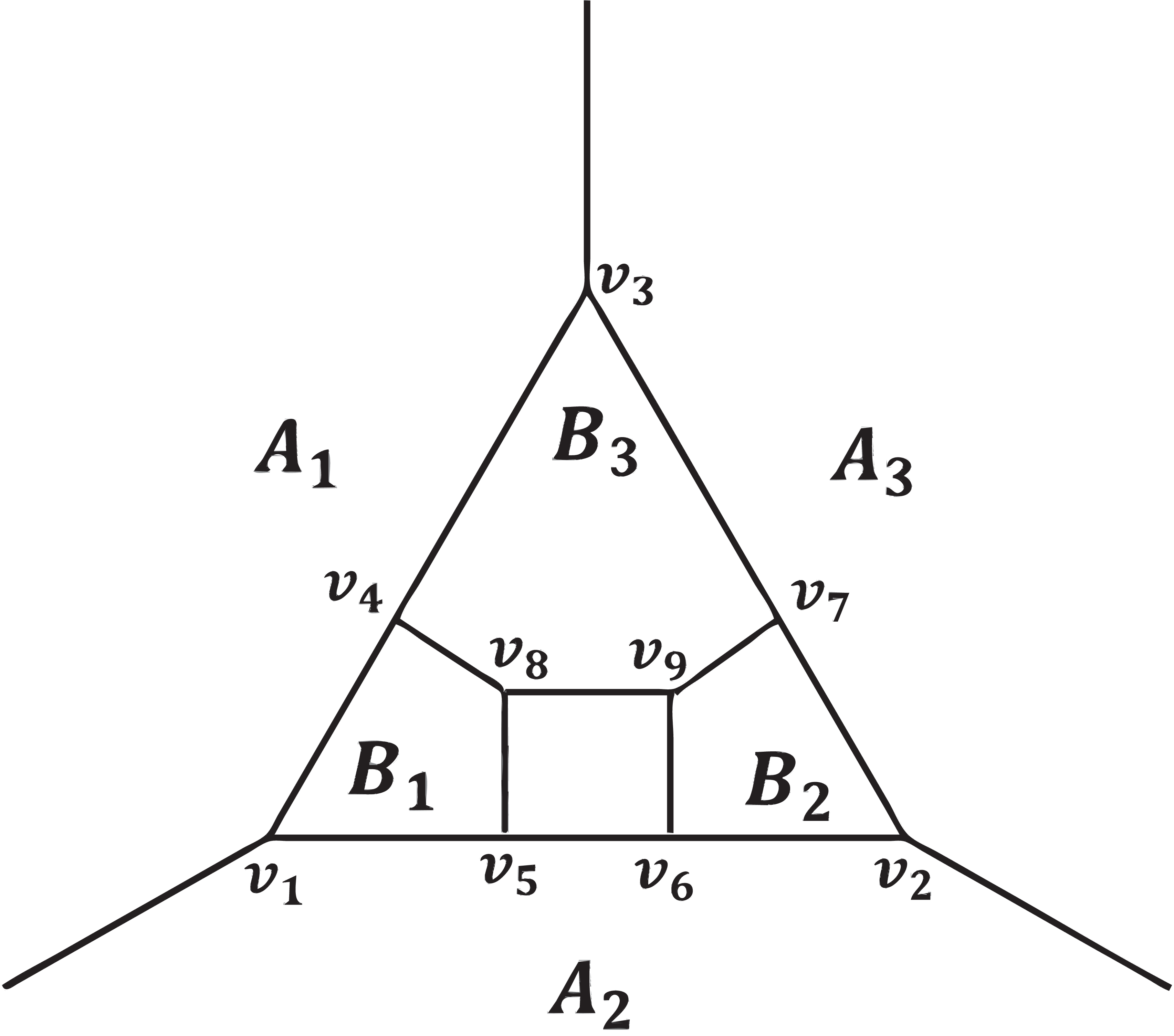}
\end{center}
\caption{The case $v(P)=10$}
\end{figure} 

\begin{figure}[htb]
\begin{center}
\includegraphics[width=80mm]{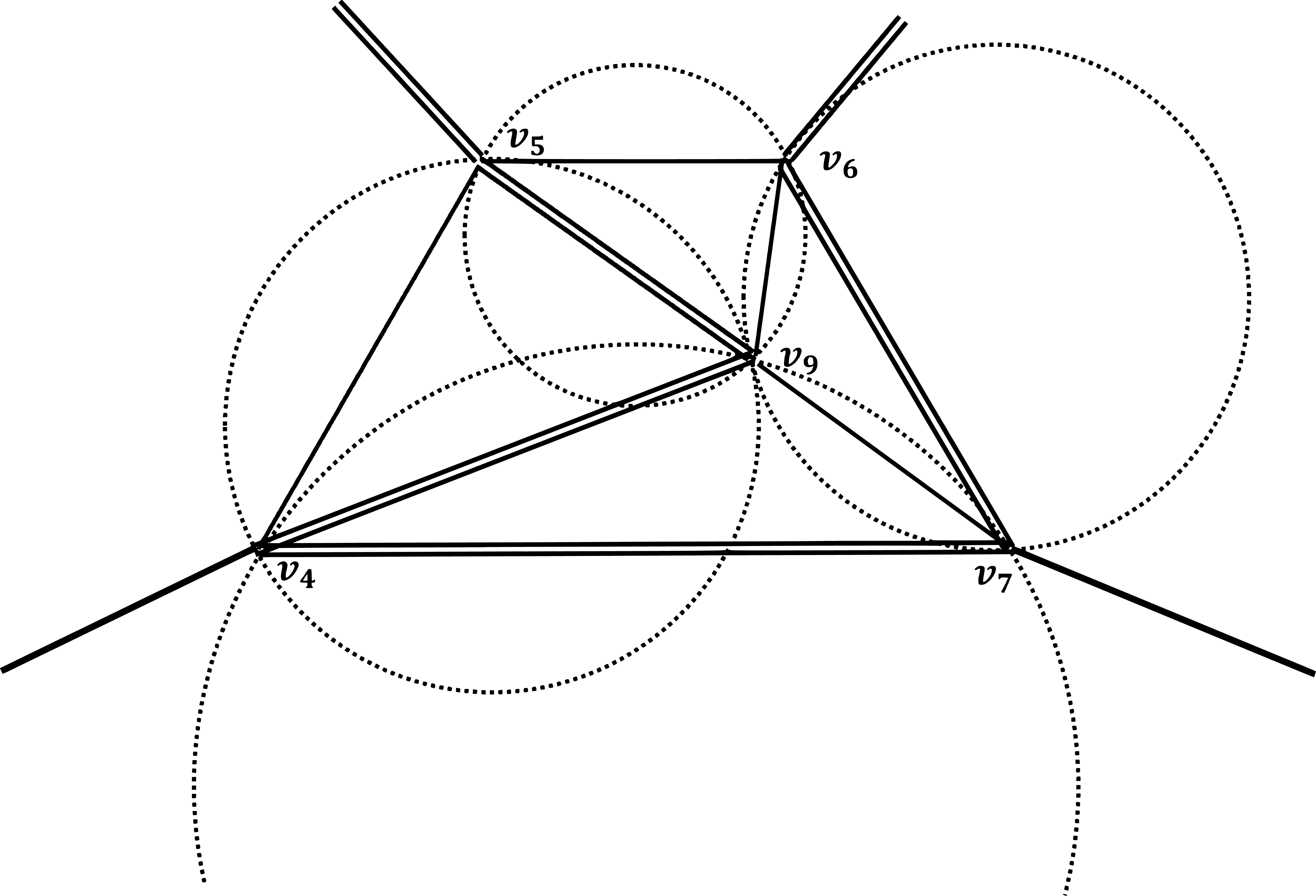}
\end{center}
\caption{The combinatorial structure of $P'$}
\end{figure} 

The doubled straight lines in Fig. 9 are represent the dihedral angles consist by these edges are $2\pi /3$, and the other dihedral angles are $\pi /3$. 
Now we decompose $P'$ by four tetrahedra: $v_0v_4v_5v_9$, $v_0v_4v_7v_9$, $v_0v_6v_7v_9$ and $v_0v_5v_6v_9$. 
By using some methods of fundamental Euclidean geometry, we find that two tetrahedra $v_0v_4v_5v_9$ and $v_0v_5v_6v_9$ are congruent and whose dihedral angles are consisted by $\pi /3$, $a$ and $b+\pi /3$. 
The values $a$ and $b$ are the measures of acute angles which satisfy $a=\arccos (\sqrt{10} /4)$ and $b=\arccos (\sqrt{2}(3+\sqrt{5})/8)$. 
Two tetrahedra $v_0v_4v_7v_9$ and $v_0v_6v_7v_9$ are also congruent. These two tetrahedra have dihedral angles $2\pi /3$, $a$ and $b$. 
Then, we obtain the volume of $P'$ 
\newfam\cyrfam
\font\tencyr=wncyr10
\def\cyr{\fam\cyrfam\tencyr}
\def\russianL{\mathop{\hbox{\cyr L}}}
\begin{align*}
vol(P')&= 2\left( {\russianL}(\pi /3)+{\russianL}(a)+{\russianL}(b+\pi /3)\right)+2\left( {\russianL}(2\pi /3)+{\russianL}(a)+{\russianL}(b)\right) \\
&\sim 3.137614. 
\end{align*}
Then we obtain the relationship between the volume of $P_2$ and that of $\pi /3$-equiangular pentagonal prism $P$ as follows; 
\begin{align*}
vol(P)= 4vol(P_1)+vol(P')\sim 7.198378> vol(P_2)=5vol(P_1)\sim 5.074705. 
\end{align*}

\begin{figure}[htb]
\begin{center}
\includegraphics[width=80mm]{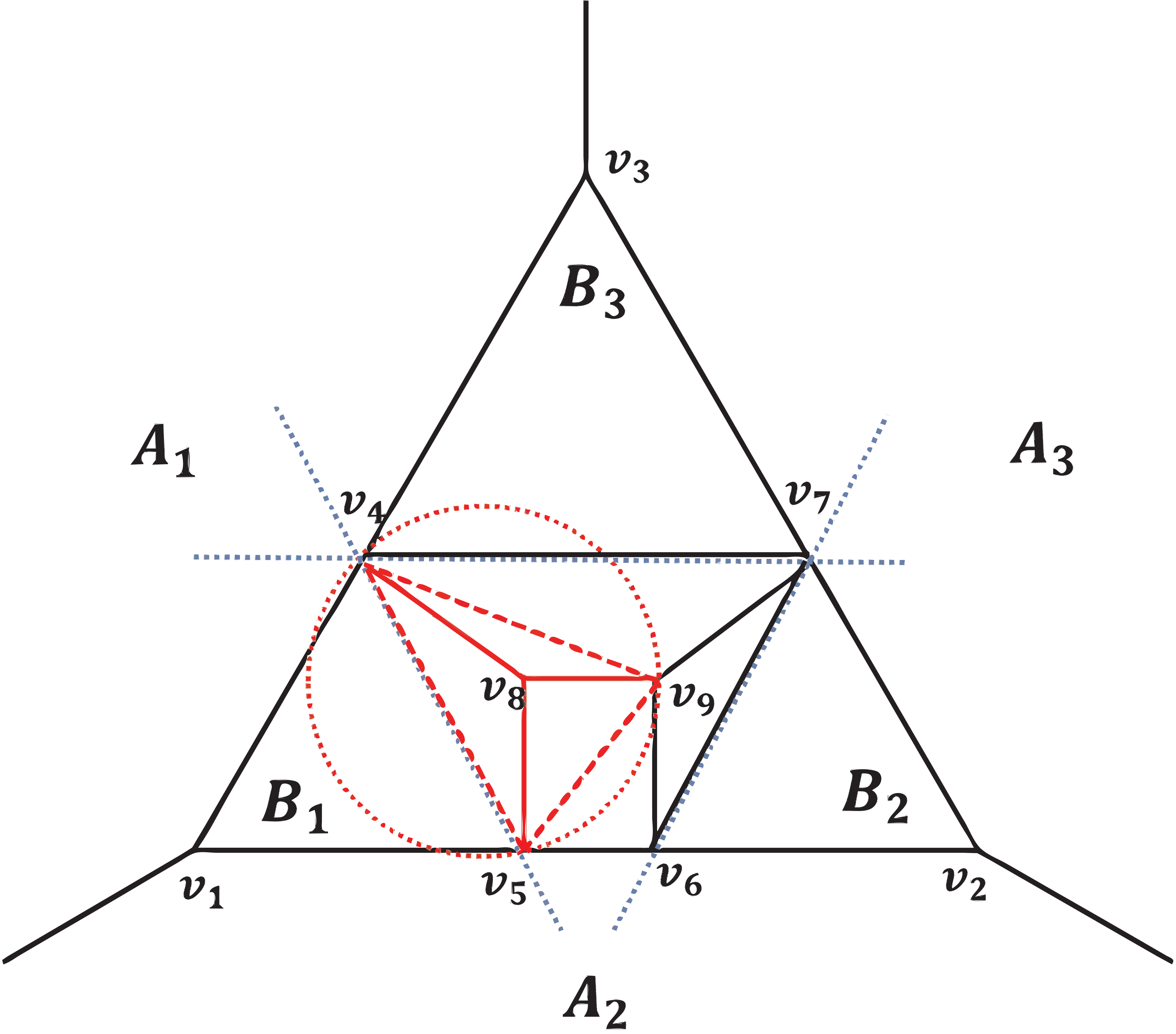}
\end{center}
\caption{The image of decomposition of $P$ which satisfies $v(P)=10$}
\end{figure}

Next, we consider the case that $P$ satisfies $v(P)\geq 12$. 
Denote by the ideal vertices $v_0$, $v_1$, $v_2$, $v_3$, the faces $A_1$, $A_2$, $A_3$, $B_1$, $B_2$ and $B_3$ as before. 
Since at least one of $A_1$, $A_2$ and $A_3$ are neither triangles nor quadrilaterals, we can set that $A_2$ has more than four sides. 
Then there is a face that is adjacent to $A_2$ other than $A_1$, $A_2$, $B_1$, and $B_2$. Let denote this face by $B_4$. 
By Lemma \ref{lem:2}, $B$ has at least two ideal vertices that are not shared by any of faces $A_1$, $A_2$ and $A_3$. 
Select such two vertices and denote them by $v_x$ and $v_y$. 

Consider the case that $v_x$ and $v_y$ are not connected by a side of $B_4$. 
Then by using a halfplane that passes through three vertices which connect to $v_x$ by the edges of $P$, 
we can decompose a tetrahedron that has these three vertices and a vertex $v_x$ from $P$. 
In the same manner, we can also decompose the other tetrahedron one of whose vertices is $v_y$ from $P$. 
Moreover, we can decompose three tetrahedra, each of those has the edge $v_0v_k$ for $k=1$, 2, 3. 
Thus, in this case, $vol(P)>5vol(P_1)=vol(P_2)$. 

Now we consider otherwise, i.e., $v_x$ and $v_y$ are connected by a side of $B_4$. 
If $B_4$ is not a quadrilateral, then we can select two vertices which are not shared by $A_2$ and are not connected by a side of $B_4$. 
In this case, we can show that $vol(P)>5vol(P_1)=vol(P_2)$ as above. 
So, we only need to consider the case that $B_4$ is a quadrilateral. 

We suppose that $P$ has exactly two vertices $v_x$ and $v_y$ that are not shared by any of faces $A_1$, $A_2$ and $A_3$. 
Then one of the three edges that share $v_x$ has an ideal vertex of either $A_1$ or $A_3$. 
One of three edges that share $v_y$ has also an ideal vertex of either $A_1$ or $A_3$. 
Thus, since $P$ has no $3$-circuit, then $B_4$ must be adjacent to four faces: $A_2$, $B_1$, $B_2$ and $B_3$.    
This means that the combinatorial structure of $P$ coincides with as Fig. 7, and it does not satisfy condition $v(P)\geq 12$. 
Thus there exist other ideal points that are not shared by any of faces $A_1$, $A_2$ and $A_3$. 
Denote this new ideal vertex by $v_z$. 
Since $P$ does not have a triangular face, $v_z$ is neither connected to $v_x$ nor $v_y$ by an edge of $P$. 
We can assume that $v_z$ is not connected to $v_x$ by an edge. 
Then, in the same way as above, by using a halfplane that passes through three vertices that connect to $v_x$ by the edges of $P$, we can decompose a tetrahedron that has these three vertices and a vertex $v_x$ from $P$. 
We also obtain another tetrahedron which has a vertex $v_z$ in the same way as above. 
In addition, we obtain three tetrahedra form $P$, each of those has the edge $v_0v_k$ for $k=1$, 2, 3 (see also Fig. 11). 

\begin{figure}[htb]
\begin{center}
\includegraphics[width=80mm]{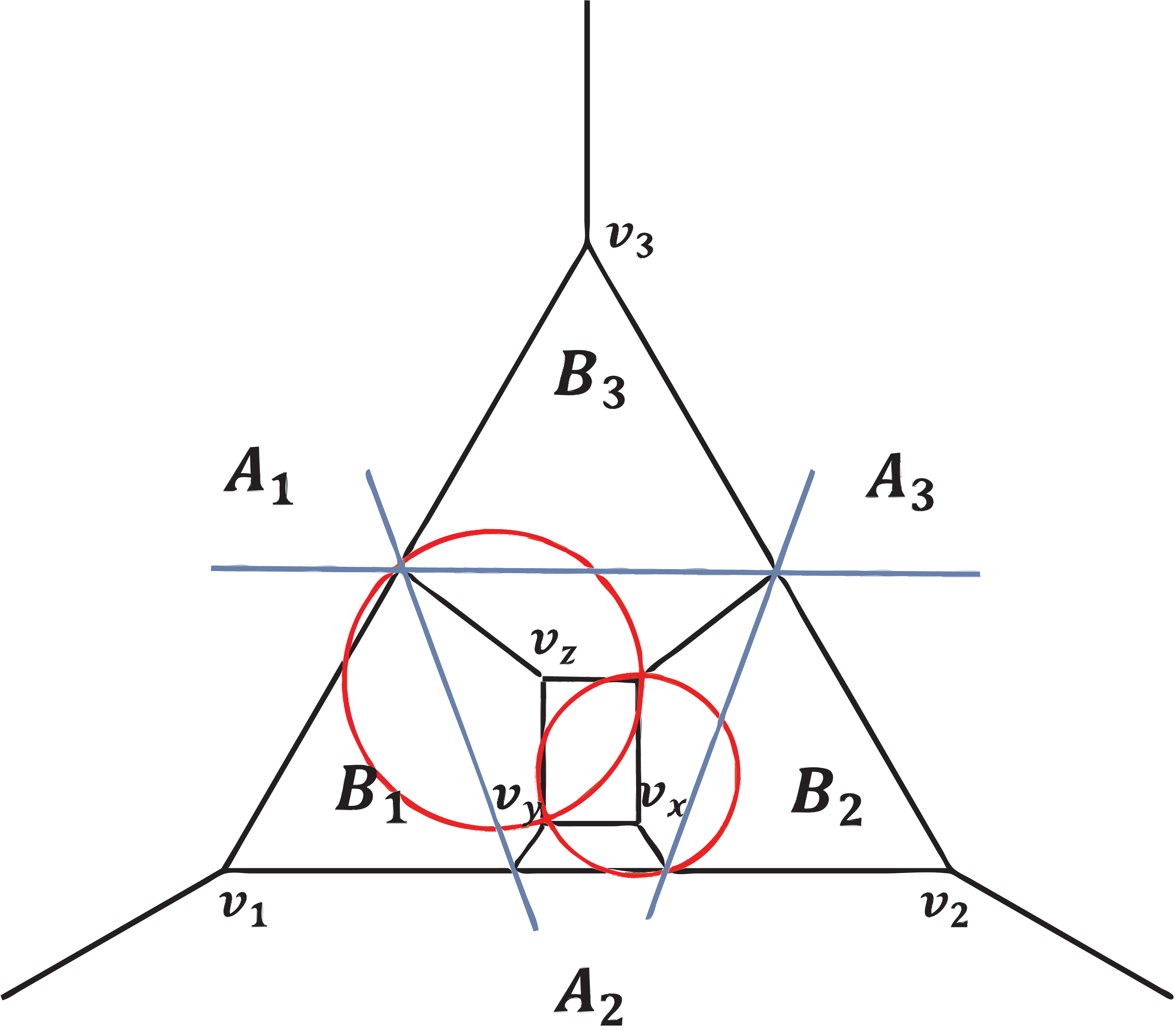}
\end{center}
\caption{An example of decomposition of $P$ which satisfies $v(P)=12$}
\end{figure}

Thus, we get $vol(P)>5vol(P_1)=vol(P_2)$. 

Hence, we proved the following Theorem. 

\begin{thm}\label{Mainthm}
The Polyhedron $P$ has the second smallest volume among all $\pi /3$-equiangular polyhedra in $\mathbb{H}^3$. 
\end{thm}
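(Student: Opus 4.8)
The plan is to combine Atkinson's Theorem~\ref{thm:Atk} with the combinatorial and volumetric analysis assembled above. By Theorem~\ref{thm:Atk}, $P_1$ is the unique $\pi/3$-equiangular polyhedron of minimal volume up to isometry, and we have already seen $vol(P_2)=5\,vol(P_1)$; hence it suffices to prove $vol(P)>5\,vol(P_1)$ for every $\pi/3$-equiangular polyhedron $P$ with $P\neq P_1,P_2$. First I would record the combinatorial constraints on such a $P$: it is ideal and simple, has no triangular face by Lemma~\ref{lem:2}, contains no $3$-circuit, and no two faces meet in more than one edge. Combining $2e(P)=3v(P)$ with Euler's identity gives $f(P)=v(P)/2+2$, so $v(P)$ is even; since $v(P)\in\{4,6,8\}$ would force $P$ to be $P_1$, a (nonexistent) $\pi/3$-equiangular triangular prism, or $P_2$, we may assume $v(P)\geq 10$. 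The argument then splits into $v(P)=10$ and $v(P)\geq 12$.

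For $v(P)=10$, rules (1)--(4) pin down the combinatorial type of a pentagonal prism (Fig.~8). Cutting $P$ along the four halfplanes $\langle H_1\rangle,\dots,\langle H_4\rangle$ peels off four tetrahedra, each of which has three dihedral angles equal to $\pi/3$ and so is isometric to $P_1$ by Lemma~\ref{lem:1}, leaving the piece $P'$ of Fig.~9. I would then decompose $P'$ into the four tetrahedra $v_0v_4v_5v_9$, $v_0v_4v_7v_9$, $v_0v_6v_7v_9$, $v_0v_5v_6v_9$, compute their dihedral angles by elementary Euclidean geometry in the upper half-space model (two congruent pairs, with angle triples $(\pi/3,a,b+\pi/3)$ and $(2\pi/3,a,b)$, where $a=\arccos(\sqrt{10}/4)$ and $b=\arccos(\sqrt2(3+\sqrt5)/8)$), and apply Milnor's formula to obtain $vol(P')\sim 3.1376$. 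Then $vol(P)=4\,vol(P_1)+vol(P')\sim 7.20>5\,vol(P_1)=vol(P_2)$.

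For $v(P)\geq 12$, the key observation is that, since $P$ is simple and $\pi/3$-equiangular with no triangular faces, the ``corner'' tetrahedron obtained by cutting $P$ at any ideal vertex $w$ with the halfplane through the three $P$-neighbours of $w$ has three dihedral angles equal to $\pi/3$, hence is isometric to $P_1$ by Lemma~\ref{lem:1}. So it is enough to exhibit five pairwise non-overlapping corner tetrahedra in $P$ that leave a remainder of positive volume, for then $vol(P)>5\,vol(P_1)=vol(P_2)$. Fixing $v_0$ on $A_1,A_2,A_3$, three of them are the corner tetrahedra at the $P$-neighbours $v_1,v_2,v_3$ of $v_0$, which carry the edges $v_0v_k$. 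For the remaining two: some face, say $A_2$, has at least five sides, so there is a face $B_4$ adjacent to $A_2$ but to none of $A_1,A_3,B_1,B_2$, and by Lemma~\ref{lem:2} $B_4$ has at least two ideal vertices lying on no $A_i$; if two such vertices are not joined by an edge of $B_4$, the corner tetrahedra at them complete the five. If $B_4$ is not a quadrilateral, such a pair always exists; and if $B_4$ is a quadrilateral with exactly two such vertices, then the absence of $3$-circuits forces the pentagonal-prism type of Fig.~8, so $v(P)=10$, contradicting $v(P)\geq 12$ — hence there is a third vertex $v_z$ on no $A_i$, non-adjacent to one of the first two, which supplies the fifth tetrahedron (cf. Fig.~11). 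Putting the two cases together yields the theorem.

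The step I expect to be the main obstacle is the combinatorial bookkeeping in the case $v(P)\geq 12$: one must verify that the five corner tetrahedra really are mutually non-overlapping and that the leftover region is a genuine nonempty polyhedron, and it is precisely here that ``no triangular face'' and ``no $3$-circuit'' are used in an essential way. By contrast, the case $v(P)=10$ is a finite computation once $a$ and $b$ are identified, and the opening reduction is immediate from Atkinson's theorem together with Lemmas~\ref{lem:1} and~\ref{lem:2}.
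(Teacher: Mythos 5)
Your proposal follows essentially the same route as the paper: the same reduction via Atkinson's theorem and $vol(P_2)=5\,vol(P_1)$, the same combinatorial constraints and Euler count forcing $v(P)\geq 10$, the identical treatment of the pentagonal prism (four corner tetrahedra plus the remainder $P'$ with angle data $a$, $b$), and the same $B_4$-based case analysis for $v(P)\geq 12$. The argument is correct as the paper presents it, and your closing remark rightly identifies the non-overlapping of the five corner tetrahedra in the $v(P)\geq 12$ case as the point needing the most care.
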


\section{The volume of $P_n$}
In the previous section, we showed that $P_2$ has the second smallest volume among all $\pi /3$-equiangular polyhedra. 
The aim of this section is to give the volume of $P_n$. 

By the same way of obtaining the relation of the volume 
\[ vol(P_2)=5vol(P_1) \]
As we mentioned in Section 2, we can decompose $P_n$ by $n(3n-1)/2$ tetrahedra which are isometry to $P_1$. 
Figs. 12--14 show the way of such decomposition of $P_n$ for $2\leq n\leq 5$. 
We also note that the dotted lines and the dotted circles show the hyperfaces for decomposing $P_n$ in these figures. 
Then we obtain
\[ vol(P_n)=\frac{n(3n-1)}{2}vol(P_1). \]

\begin{figure}[htb]
\begin{center}
\includegraphics[width=40mm]{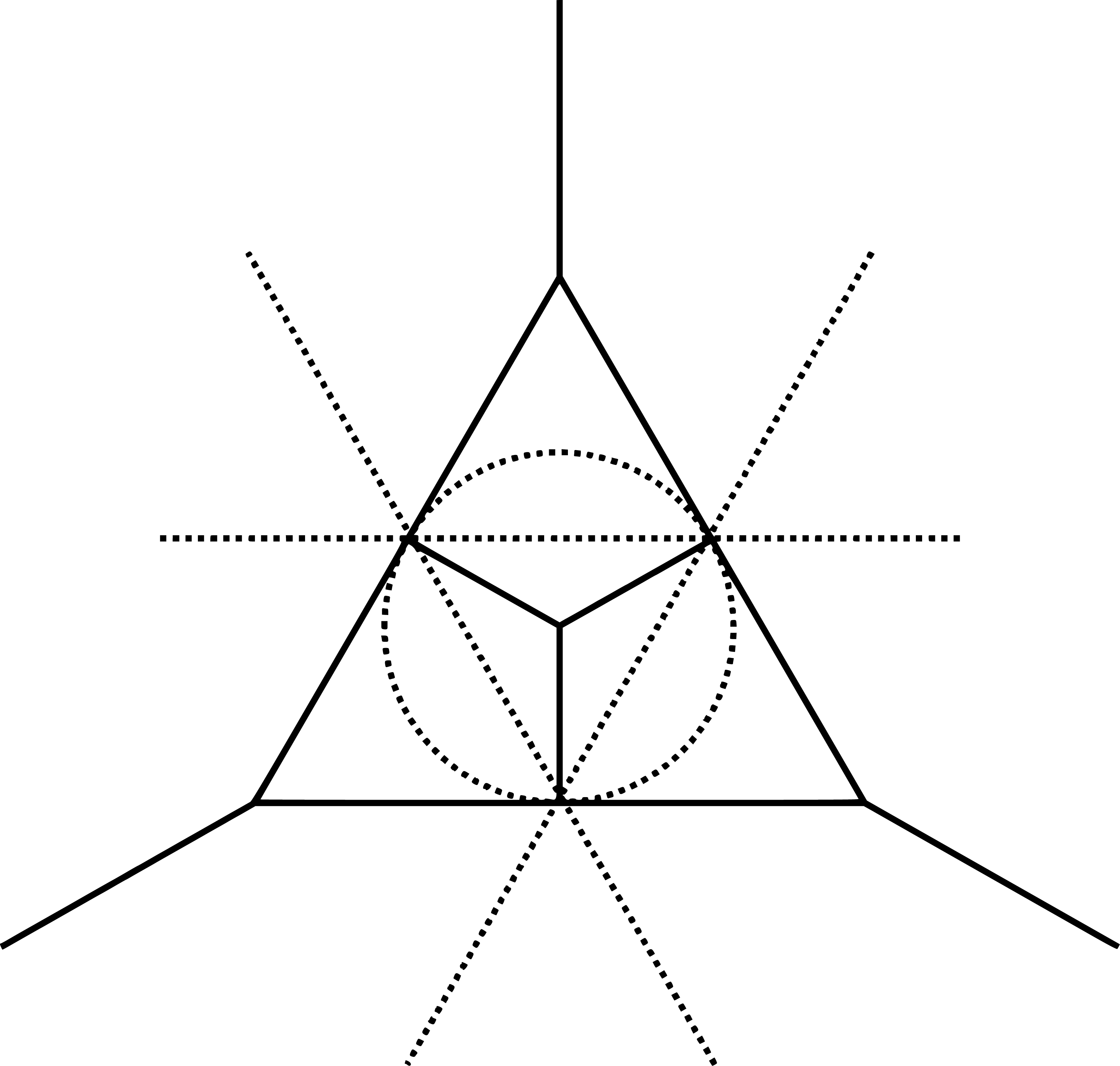}
\end{center}
\caption{Decomposition of $P_2$ by five tetrahedra}
\end{figure}

\begin{figure}[htb]
\begin{center}
\includegraphics[width=50mm]{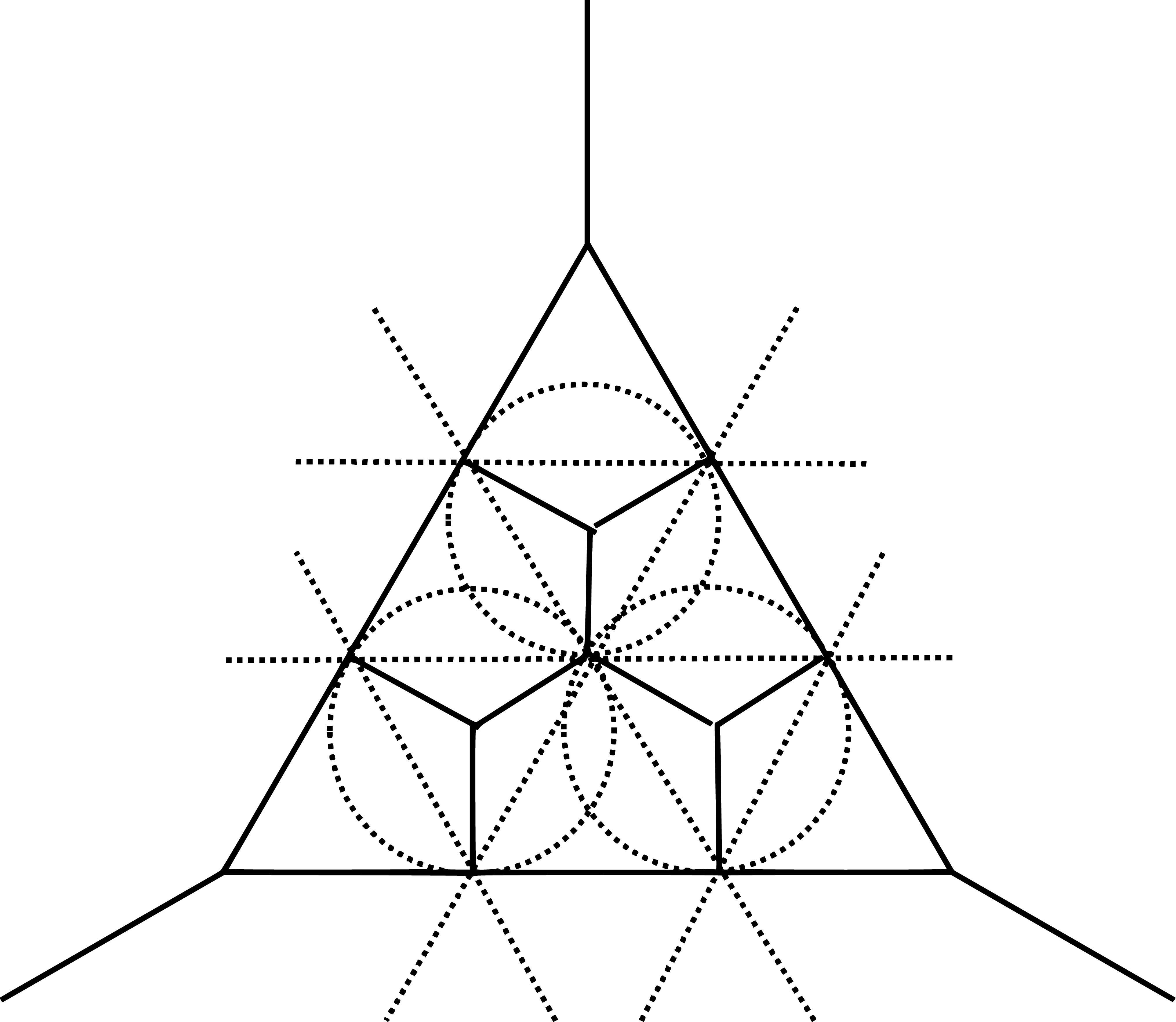}
\end{center}
\caption{Decomposition of $P_3$ by twelve tetrahera}
\end{figure}

\begin{figure}[htb]
\begin{center}
\includegraphics[width=60mm]{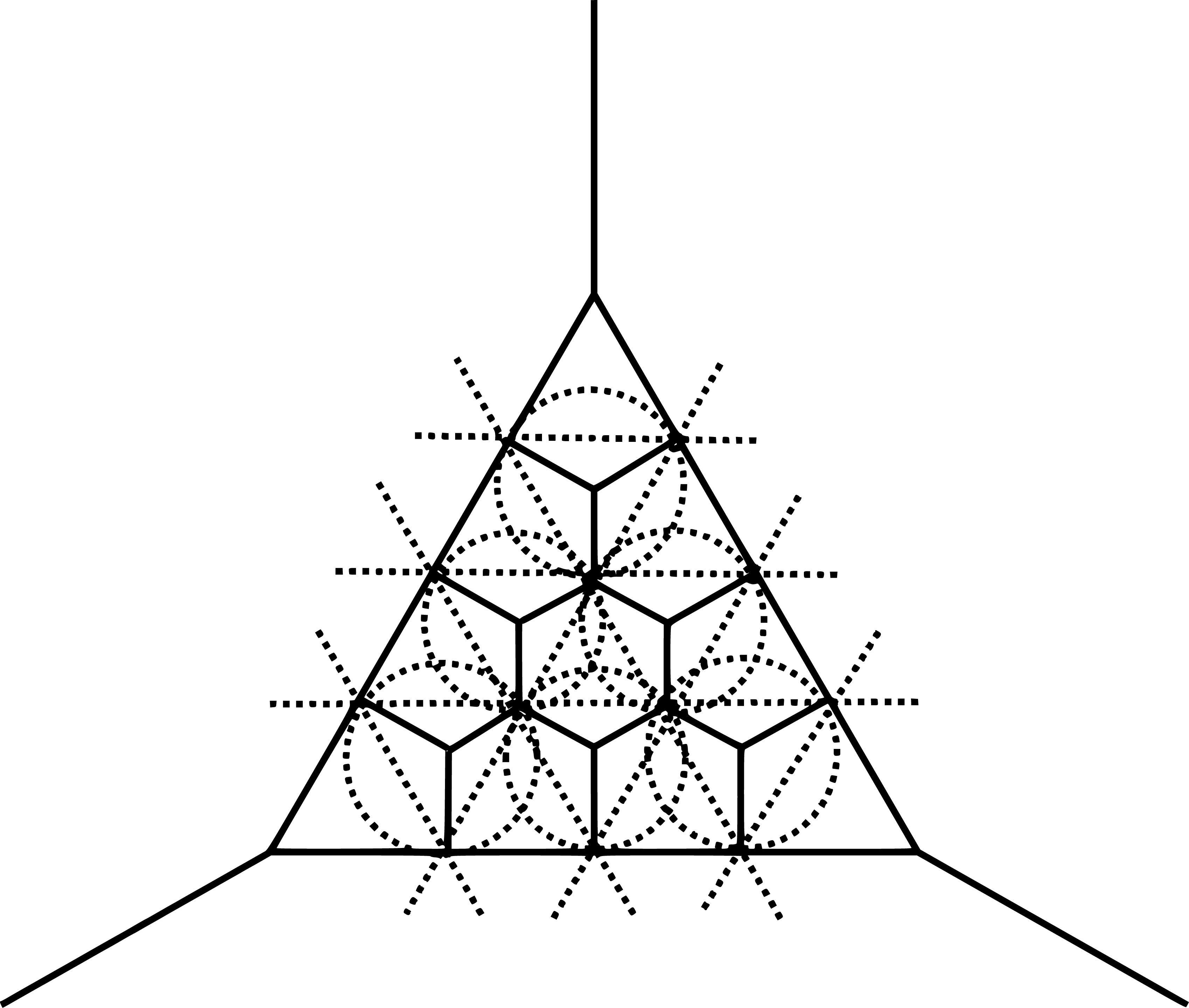}
\end{center}
\caption{Decomposition of $P_4$ by twenty two tetrahedra}
\end{figure}

\begin{figure}[htb]
\begin{center}
\includegraphics[width=60mm]{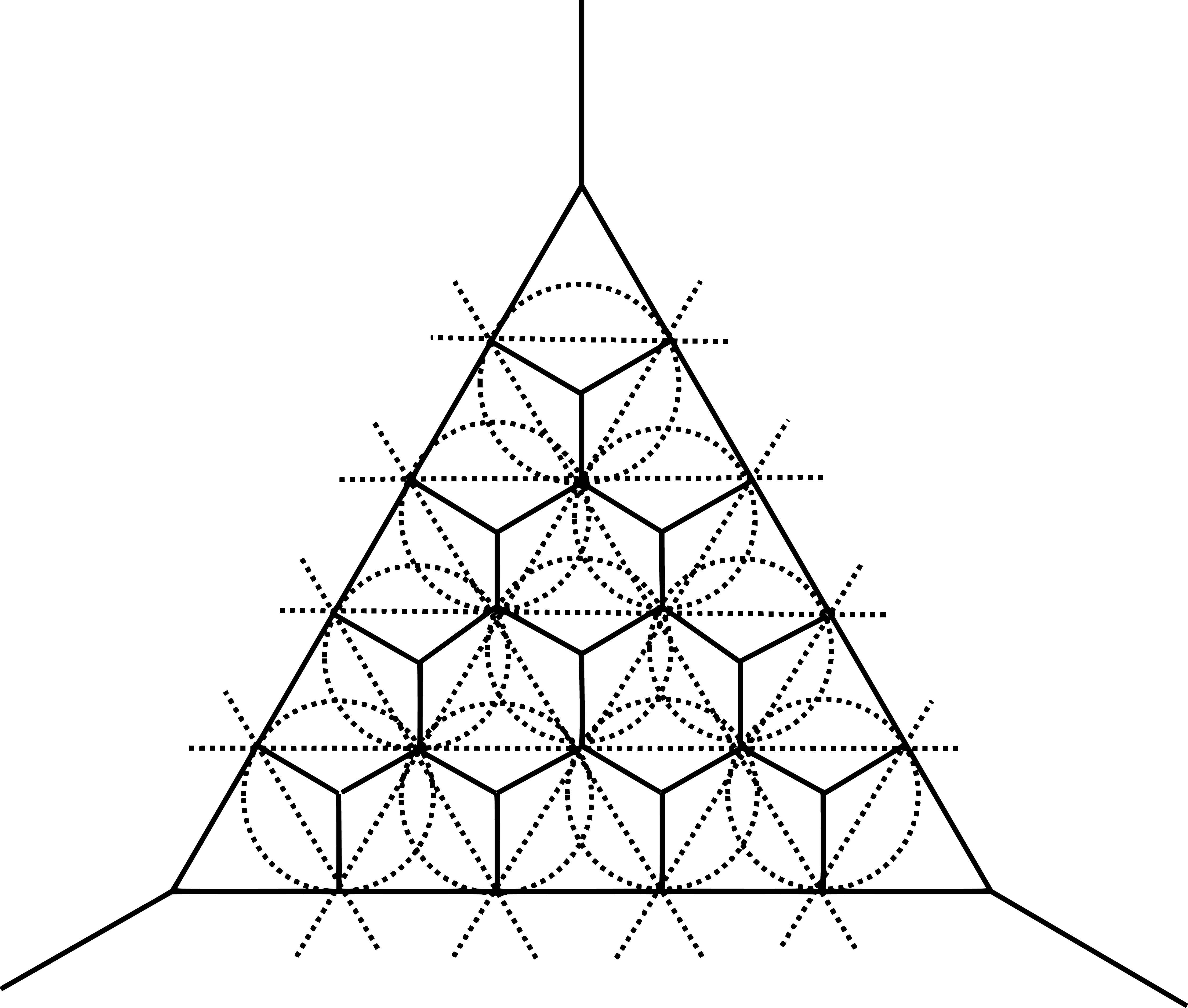}
\end{center}
\caption{Decomposition of $P_5$ by thirty five tetrahedra}
\end{figure}
\clearpage 

\subsection*{Acknowledgment}
The author would like to thank Tomoshige Yukita for telling him the reference \cite{ref:atk}. 
This paper is a part of outcome of research performed under a Waseda University Grant for Special Research Projects (Projects number: 2021C-351).



\vspace{1cm}
\begin{flushleft}
Waseda University Junior and Senior High School, \\
3-31-1 Kamishakujii, Nerima-ku, Tokyo, 177-0044, Japan. \\
email: \verb|jun-b-nonaka@waseda.jp|
\end{flushleft}
\end{document}